\theoremstyle{plain}
\newtheorem{thm}{Theorem}[section]
\newtheorem{pro}[thm]{Proposition}
\newtheorem{lem}[thm]{Lemma}
\newtheorem{cor}[thm]{Corollary}
\newtheorem{theoalph}{Theorem}
\theoremstyle{definition}
\newtheorem{rem}[thm]{Remark}
\numberwithin{equation}{section}
\begin{document}
\selectlanguage{english}

\title{Representations of some lattices into the group of analytic diffeomorphisms of the sphere $\mathbb{S}^2$}
\thanks{\noindent The author is supported by the Swiss National Science Foundation grant no PP00P2\_128422 /1 and 
the A.N.R. project ``BirPol``.}

\author{Julie D\'eserti}
\address{Universit\"{a}t Basel, Mathematisches Institut, Rheinsprung $21$, CH-$4051$ Basel, Switzerland.}
\medskip
\address{On leave from Institut de Math\'ematiques de Jussieu, Universit\'e Paris $7$, Projet G\'eom\'etrie et 
Dynamique, Site Chevaleret, Case $7012$, $75205$ Paris Cedex 13, France.}
\email{deserti@math.jussieu.fr}

\maketitle\begin{center}{\today}\end{center}

\begin{abstract}
In \cite{Ghys} it is proved that any morphism from a subgroup of finite index of $\mathrm{SL}(n,\mathbb{Z})$
to the group of analytic diffeomorphisms of $\mathbb{S}^2$ has a finite image as soon as $n\geq 5$. 
The case $n=4$ is also claimed to follow along the same arguments; in fact this is not straightforward and
this case indeed needs a modification of the argument. In this paper we recall the strategy for $n\geq 5$
and then focus on the case $n=4$.

\noindent{\it 2010 Mathematics Subject Classification. --- 58D05, 58B25}
\end{abstract}

\section{Introduction}

After the works of Margulis (\cite{Margulis, VGS}) on the linear representations of lattices of simple, real Lie groups 
with~$\mathbb{R}$-rank larger than $1$, some authors, like Zimmer, suggest to study the actions of lattices on compact manifolds
(\cite{Zimmer3, Zimmer4, Zimmer1, Zimmer2}). One of the main conjectures of this program is the following: let us consider 
a connected, simple, real Lie group $\mathrm{G}$ and let $\Gamma$ be a lattice of $\mathrm{G}$ of $\mathbb{R}$-rank larger than $1$.
If there exists a morphism of infinite image from $\Gamma$ to the group of diffeomorphisms of a compact manifold $M$, 
then the $\mathbb{R}$-rank of $\mathrm{G}$ is bounded by the dimension of $M$. There are a lot of contributions in that
direction (\cite{BurgerMonod, Cantat1, Cantat2, Deserti, FarbShalen, FranksHandel, Ghys, Ghys2, Navas, Polterovich}). 
In this article we will focus on the embeddings of subgroups of finite index of 
$\mathrm{SL}(n,\mathbb{Z})$ into the group $\mathrm{Diff}^\omega(\mathbb{S}^2)$ of real analytic diffeomorphisms of $\mathbb{S}^2$
(\emph{see} \cite{Ghys}).

The article is organized as follows.
First of all we will recall the strategy of \cite{Ghys}: the study of the nilpotent subgroups of 
$\mathrm{Diff}^\omega(\mathbb{S}^2)$ implies that such subgroups
are metabelian. But subgroups of finite index of $\mathrm{SL}(n,\mathbb{Z})$, for $n\geq 5$, contain nilpotent
subgroups of length $n-1$ of finite index which
are not metabelian; as a consequence Ghys gets the following statement.

\begin{theoalph}[\cite{Ghys}]
Let $\Gamma$ be a subgroup of finite index of $\mathrm{SL}(n,\mathbb{Z})$. As soon as $n\geq 5$ there is no
embedding of~$\Gamma$ into $\mathrm{Diff}^\omega(\mathbb{S}^2)$.
\end{theoalph}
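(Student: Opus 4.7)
The approach is to argue by contradiction. Suppose $\rho\colon \Gamma \hookrightarrow \mathrm{Diff}^\omega(\mathbb{S}^2)$ is an injective morphism. The plan is to exhibit inside $\Gamma$ a finitely generated nilpotent subgroup $H$ that is \emph{not} metabelian; its image $\rho(H)$ is then a nilpotent subgroup of $\mathrm{Diff}^\omega(\mathbb{S}^2)$ and so, by the structural result of Ghys recalled at the start of the paper, must be metabelian. Since $\rho|_H$ is injective, this would force $H$ itself to be metabelian, yielding the desired contradiction.

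For the construction of $H$, I would take $H := \Gamma \cap U_n(\mathbb{Z})$, where $U_n(\mathbb{Z})$ denotes the group of unipotent upper triangular matrices in $\mathrm{SL}(n,\mathbb{Z})$. Since $[\mathrm{SL}(n,\mathbb{Z}):\Gamma]<\infty$, the subgroup $H$ has finite index in $U_n(\mathbb{Z})$; and because $U_n(\mathbb{Z})$ is a finitely generated torsion-free nilpotent group, each term $U_n(\mathbb{Z})^{(k)}$ of its derived series is again torsion-free and $H\cap U_n(\mathbb{Z})^{(k)}$ has finite index in it. Consequently $H$ has the same derived length as $U_n(\mathbb{Z})$, and $H$ is metabelian if and only if $U_n(\mathbb{Z})$ is.

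It remains to compute that derived length. A short calculation with elementary matrices, using the identity $[I+e_{i,j},\,I+e_{j,\ell}]=I+e_{i,\ell}$, shows that iterated commutators in $U_n(\mathbb{Z})$ produce entries whose distance from the diagonal doubles at each step; consequently the $k$-th derived subgroup is supported on entries at distance at least $2^k$, and the derived length of $U_n(\mathbb{Z})$ is $\lceil \log_2 n\rceil$. For $n\geq 5$ this is at least $3$, so the commutator subgroup of $U_n(\mathbb{Z})$ is not abelian; the same holds for $H$ by the previous step.

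Modulo this combinatorial input, the whole argument reduces to the structural theorem for nilpotent subgroups of $\mathrm{Diff}^\omega(\mathbb{S}^2)$, which is the genuine difficulty and is taken from \cite{Ghys}. The derived length formula also explains why the threshold appears at $n=5$: for $n=4$ one has $\lceil\log_2 4\rceil=2$, so $U_4(\mathbb{Z})$ is itself metabelian and the strategy above produces no contradiction. This is precisely the obstruction that the remainder of the paper must bypass in order to treat the case $n=4$.
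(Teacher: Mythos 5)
Your proposal is correct and follows the paper's own route: the entire content is the structural theorem that nilpotent subgroups of $\mathrm{Diff}^\omega(\mathbb{S}^2)$ are metabelian, which you invoke as a black box exactly as the paper's closing Corollary does, and the deduction via a non-metabelian nilpotent subgroup coming from the unipotent upper triangular matrices (for $n\geq 5$) is the same argument. If anything, your computation of the derived length $\lceil\log_2 n\rceil$ and your care in passing to the finite-index subgroup $H=\Gamma\cap U_n(\mathbb{Z})$ make explicit what the paper only asserts in one line.
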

 
To study nilpotent subgroups of $\mathrm{Diff}^\omega(\mathbb{S}^2)$ one has to study nilpotent subgroups
of $\mathrm{Diff}^\omega_+(\mathbb{S}^1)$ (\emph{see} \S\ref{Sec:S1}) and then nilpotent subgroups of the 
group of formal diffeomorphisms of $\mathbb{C}^2$ (\emph{see} \S \ref{Sec:C2}).
The last section is devoted to establish the following result.

\begin{theoalph}
Let $\Gamma$ be a subgroup of finite index of $\mathrm{SL}(n,\mathbb{Z})$. As soon as $n\geq 4$ there is no
embedding of~$\Gamma$ into $\mathrm{Diff}^\omega(\mathbb{S}^2)$.
\end{theoalph}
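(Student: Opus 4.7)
The plan is to argue by contradiction: assume a faithful morphism $\rho : \Gamma \hookrightarrow \mathrm{Diff}^\omega(\mathbb{S}^2)$, and derive a contradiction by sharpening the analysis of nilpotent subgroups of $\mathrm{Diff}^\omega(\mathbb{S}^2)$ beyond the metabelianness used in Theorem~A. The need for a genuinely new input is apparent: the upper unitriangular group $U_4(\mathbb{Z}) \subset \mathrm{SL}(4,\mathbb{Z})$ is nilpotent of class~$3$, but it \emph{is} metabelian, since its derived subgroup consists of matrices $I + aE_{13} + bE_{14} + cE_{24}$ and the relevant products $E_{i,j}E_{k,l}$ all vanish. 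Consequently $U_4(\mathbb{Z}) \cap \Gamma$ is metabelian and cannot be ruled out by Ghys's dichotomy alone.

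The strategy I would pursue is to upgrade the nilpotence analysis of \S\ref{Sec:S1} and \S\ref{Sec:C2} to a statement of the form: every nilpotent subgroup of $\mathrm{Diff}^\omega(\mathbb{S}^2)$ has nilpotency class at most~$2$ (equivalently, any triple commutator of its elements is trivial). Concretely, I would proceed as for Theorem~A: given a nilpotent subgroup $N$, first pass to a common (possibly virtual) fixed point of a finite-index subgroup, then linearise at this fixed point and embed $N$ into the group of formal diffeomorphisms of $\mathbb{C}^2$. In this formal setting one writes elements as compositions of the form $\exp(X)$ for formal vector fields $X$, and controls how many commutator brackets are needed before the Baker--Campbell--Hausdorff expansion is forced to vanish by dimensional constraints. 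The one-dimensional result of \S\ref{Sec:S1} gives the model: on the circle, nilpotence forces abelianness; in dimension two, an analogous refinement of the filtration argument should upgrade ``metabelian'' to ``class~$\leq 2$''.

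Granted such a refinement, the theorem for $n = 4$ follows almost immediately. Choose a finite-index subgroup of $\Gamma$ contained in $U_4(\mathbb{Z})$ (replacing each generator by a suitable power). The elements $I+E_{12}$, $I+E_{23}$, $I+E_{34}$ (or their surviving powers in $\Gamma$) satisfy
\[
[\,[I+E_{12},\, I+E_{23}],\, I+E_{34}\,] \;=\; I + E_{14} \;\neq\; I,
\]
so their images under $\rho$ generate a nilpotent subgroup of $\mathrm{Diff}^\omega(\mathbb{S}^2)$ of class exactly~$3$, contradicting the refined bound.

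The main obstacle, and essentially all of the real work, lies in establishing this class~$\leq 2$ statement. It must be sharp enough to exclude the class~$3$ triple commutator of $U_4$, yet compatible with the Heisenberg-type (class~$2$) nilpotent subgroups of $\mathrm{Diff}^\omega(\mathbb{S}^2)$ that genuinely exist — otherwise one would also rule out $\mathrm{SL}(3,\mathbb{Z})$, which the theorem does not claim. Locating this threshold inside the formal-diffeomorphism analysis of \S\ref{Sec:C2}, and in particular controlling what happens when the nilpotent subgroup does not fix a true point of $\mathbb{S}^2$ but only a formal one, is the delicate step I expect to require the bulk of the argument.
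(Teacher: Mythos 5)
Your diagnosis of why Theorem~A fails for $n=4$ is exactly right: $\mathrm{U}(4,\mathbb{Z})$ is metabelian though nilpotent of class $3$, so the metabelian obstruction is useless and a genuinely new input is required. But the new input you propose --- that every nilpotent subgroup of $\mathrm{Diff}^\omega(\mathbb{S}^2)$ has nilpotency class at most $2$ --- is left entirely unproved (``an analogous refinement of the filtration argument should upgrade \dots'' is the whole theorem), and, more seriously, it is the wrong target. The local analysis at a fixed point, which is where you propose to run the Baker--Campbell--Hausdorff/dimension-count argument, does \emph{not} force class $\leq 2$: the paper's own Proposition~\ref{Pro:nilpsubalgl3} produces the explicit $4$-dimensional Lie algebra
$$\mathfrak{n}=\Big\{P(\widetilde{y})\tfrac{\partial}{\partial\widetilde{x}}+\alpha\tfrac{\partial}{\partial\widetilde{y}}\ \Big\vert\ \alpha\in\mathbb{R},\ \deg P\leq 2\Big\},$$
which is nilpotent of class exactly $3$ and is precisely what survives the two-dimensional formal analysis. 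So no dimensional constraint in $\widehat{\mathrm{Diff}}(\mathbb{C}^2,0)$ (or $\widehat{\chi}(\mathbb{C}^2,0)$) rules out class $3$, and your key lemma cannot be obtained by the route you sketch.

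What the paper actually does is a classification rather than an exclusion: every class-$3$ nilpotent subalgebra of $\widehat{\chi}(\mathbb{R}^2,0)$ is isomorphic to a subalgebra of $\mathfrak{n}$, and such a subalgebra has one-dimensional center. One then passes from $\rho\vert_{\mathrm{U}(4,\mathbb{Z})}$ to the Malcev completion $\mathrm{U}(4,\mathbb{R})$ and its Lie algebra, obtaining a surjection $\psi\colon\mathfrak{u}(4,\mathbb{R})\to\mathrm{im}\,D\widetilde{\rho}$; since $\dim\mathfrak{u}(4,\mathbb{R})=6$, the kernel is a $2$-dimensional ideal, necessarily $\langle\delta_{14},a\delta_{13}+b\delta_{24}\rangle$, and every such quotient has a \emph{two}-dimensional center. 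The contradiction $2\neq 1$ is an invariant of the specific algebra $\mathfrak{u}(4,\mathbb{R})$, not a general bound on nilpotency class in $\mathrm{Diff}^\omega(\mathbb{S}^2)$. Your proposal is therefore missing both the classification (the Liouvillian-coordinate lemma that puts commuting non-colinear vector fields in the form $\frac{\partial}{\partial\widetilde{x}},\frac{\partial}{\partial\widetilde{y}}$) and the algebraic step comparing $\mathfrak{u}(4,\mathbb{R})$ with $\mathfrak{n}$; as written, the argument does not close.
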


The proof relies on the characterization, up to isomorphism, of nilpotent subalgebras of length $3$ of the 
algebra of formal vector fields of $\mathbb{C}^2$ which vanish at the origin.

\subsection*{Acknowledgements} The author would like to thank Dominique Cerveau and \'Etienne Ghys for interesting discussions 
and advices. 

\section{Nilpotent subgroups of the group of analytic diffeomorphisms of $\mathbb{S}^1$}\label{Sec:S1}

Let $\mathrm{G}$ be a group; let us set 
\begin{align*}
&\mathrm{G}^{(0)}=\mathrm{G} && \& && \mathrm{G}^{(i)}=[\mathrm{G},\mathrm{G}^{(i-1)}] \,\,\,\,\,\forall\, i\geq 1.
\end{align*}
The group $\mathrm{G}$ is {\sl nilpotent} if there exists an integer $n$ such that $\mathrm{G}^{(n)}=\{\mathrm{id}\}$; the 
{\sl length of nilpotence} of $\mathrm{G}$ is the smallest integer $k$ such that $\mathrm{G}^{(k)}=\{\mathrm{id}\}$.

Set
\begin{align*}
&\mathrm{G}_{(0)}=\mathrm{G} && \& && \mathrm{G}_{(i)}=[\mathrm{G}_{(i-1)},\mathrm{G}_{(i-1)}] \,\,\,\,\,\forall\, i\geq 1.
\end{align*}
The group $\mathrm{G}$ is {\sl solvable} if $\mathrm{G}_{(n)}=\{\mathrm{id}\}$ for a certain $n$; the {\sl length
of solvability} of $\mathrm{G}$ is the smallest integer $k$ such that $\mathrm{G}_{(k)}=\{\mathrm{id}\}$.

We say that the group $\mathrm{G}$ (resp. algebra $\mathfrak{g}$) is {\sl metabelian} if $[\mathrm{G},
\mathrm{G}]$ (resp. $[\mathfrak{g},\mathfrak{g}]$) is abelian. 

\begin{pro}[\cite{Ghys}]\label{Pro:nilpsubs1}
Any nilpotent subgroup of $\mathrm{Diff}^\omega_+(\mathbb{S}^1)$ is abelian.
\end{pro}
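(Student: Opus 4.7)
The plan is to combine the amenability of nilpotent groups with classical rigidity results for commuting diffeomorphisms of the interval (Kopell's Lemma and Szekeres' theorem).

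First, I would use that a nilpotent group is amenable, so its action on $\mathbb{S}^1$ preserves a probability measure $\mu$. The associated Poincar\'e rotation number $\rho_\mu\colon G\to\mathbb{R}/\mathbb{Z}$ is then a group homomorphism, whose kernel $H$ consists of the elements of $G$ with at least one fixed point in $\mathbb{S}^1$. Since $G/H\hookrightarrow\mathbb{R}/\mathbb{Z}$, we obtain $[G,G]\subset H$.

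Next, arguing by contradiction, suppose $G$ is not abelian, and let $c\geq 2$ be its nilpotency class. The deepest non-trivial term $G^{(c-1)}$ of the lower central series lies in the center of $G$ and is contained in $[G,G]\subset H$. Pick a non-trivial $\psi\in G^{(c-1)}$; it is central, has a fixed point, and by the real-analytic identity principle its fixed-point set $F:=\mathrm{Fix}(\psi)$ is finite and non-empty. Centrality of $\psi$ gives $G\cdot F=F$, so the pointwise stabilizer $G_0$ of $F$ is a normal finite-index subgroup of $G$ preserving every connected component $I$ of $\mathbb{S}^1\setminus F$. On such a component, $\psi|_I$ is a non-identity analytic diffeomorphism of $\overline I$ with no interior fixed points (again by analyticity). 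Every element of $G_0|_I$ commutes with $\psi|_I$, so by Kopell's Lemma it has no interior fixed point unless it is the identity, and by Szekeres' theorem the centralizer of $\psi|_I$ in the $C^2$ diffeomorphisms of $\overline I$ fixing $\partial I$ embeds into a one-parameter $C^1$ flow. In particular $G_0|_I$ is abelian for each $I$, and since elements of $G_0$ act as the identity on $F$, this gives that $G_0$ itself is abelian.

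The remaining step, promoting abelianness from $G_0$ to the whole of $G$, is the main obstacle of the proof: in general a nilpotent group with an abelian subgroup of finite index need not be abelian, as shown by the quaternion group $Q_8$. One must exploit the extra constraint that $G/G_0$ is finite cyclic (it acts on $F$ by orientation-preserving, hence cyclic, permutations) and that its conjugation action on $G_0$ is forced to preserve the Szekeres flow structure on each interval; combined with nilpotency of $G$, this should force the conjugation action to be trivial, yielding $[G,G]=\{\mathrm{id}\}$ and contradicting the hypothesis. Controlling the finite cyclic quotient $G/G_0$ via the analytic rigidity on each interval is the delicate point I expect to require the most care.
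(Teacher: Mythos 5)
Your argument is sound and closely parallel to the paper's up to the point where you obtain that the pointwise stabilizer $G_0$ of $F=\mathrm{Fix}(\psi)$ is abelian (the paper gets the rotation-number homomorphism from Bavard's result on solvable subgroups rather than amenability, and replaces Kopell's Lemma by the softer remark that the finite fixed-point set of any element commuting with $\psi$ is $\psi$-invariant, hence cannot meet a component of $\mathbb{S}^1\setminus F$ on which $\psi$ acts freely; these are cosmetic differences). The genuine gap is exactly the one you flag: passing from ``$G_0$ abelian'' back to $G$. Group-theoretically nothing forces this --- a nilpotent group that is (abelian normal)-by-(cyclic) need not be abelian, the integer Heisenberg group being the standard example --- and the dynamical mechanism you invoke (``the conjugation action must preserve the Szekeres flow structure, hence should be trivial'') is not an argument: a generator of $G/G_0$ permutes the components of $\mathbb{S}^1\setminus F$ nontrivially, so it does not even act on a single Szekeres flow, and nilpotency of $G$ has not been brought to bear. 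As it stands the proof only shows that $G$ is metabelian.

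The paper avoids this trap by never stabilizing the whole group: it extracts from a non-abelian nilpotent $G$ a Heisenberg-type subgroup $\langle f,g\rangle$ with $h=[f,g]$ central and non-trivial, and then replaces $f,g$ by the iterates $f^n,g^m$ that fix $\mathrm{Fix}(h)$ pointwise. The point is that centrality of $h$ gives $[f^n,g^m]=h^{nm}$, and $h$ has infinite order (a finite-order orientation-preserving circle diffeomorphism with a fixed point is trivial), so the finite-index passage does not destroy non-abelianness; the free-action/H\"{o}lder (or Kopell) argument on each component of $\mathbb{S}^1\setminus\mathrm{Fix}(h)$ then forces $h^{nm}=\mathrm{id}$, a contradiction. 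You can repair your proof the same way: instead of the global stabilizer $G_0$, choose $x\in G$ and $y\in G^{(c-2)}$ with $z=[x,y]\neq\mathrm{id}$ central, note $z$ has infinite order and finite non-empty fixed-point set, and run your Kopell/Szekeres argument on powers of $x$ and $y$ fixing $\mathrm{Fix}(z)$ pointwise; the conclusion $z^{nm}=\mathrm{id}$ gives the contradiction without ever needing to control the finite cyclic quotient.
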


\begin{proof}
Let $\mathrm{G}$ be a nilpotent subgroup of $\mathrm{Diff}^\omega_+(\mathbb{S}^1)$. Assume that $\mathrm{G}$
is not abelian; it thus contains a Heisenberg group $$\langle f,\,g,\,h\,\vert\, [f,g]=h,\, [f,h]=[g,h]=
\mathrm{id}\rangle.$$
The application ``rotation number`` 
\begin{align*}
& \mathrm{Diff}^\omega_+(\mathbb{S}^1)\to\mathbb{R}/\mathbb{Z}, && \psi\mapsto\lim_{n\to +\infty}\frac{\psi^n(x)-x}{n}
\end{align*}
is not a morphism but its restriction to a solvable subgroup is (\cite{Bavard}). Thus the rotation number of
$h$ is zero and the set $\mathrm{Fix}(h)$ of fixed points of $h$ is non-empty and finite. Considering some
iterates of $f$ and $g$ instead of $f$ and $g$ one can assume that $f$ and $g$ fix any point of $\mathrm{Fix}(h)$. 
The set of fixed points of a non trivial element of $\langle f,\,g \rangle$ is finite and invariant by $h$
so the action of $\langle f,\,g\rangle$ is free\footnote{The stabilizer of every point is trivial, {\it i.e.}
the action of a non trivial element of $\langle f,\,g\rangle$ has no fixed point.} on each component of 
$\mathbb{S}^1\setminus\mathrm{Fix}(h)$. But the action of a free group on $\mathbb{R}$ is abelian: 
contradiction.
\end{proof}

\section{Nilpotent subgroups of the group of formal diffeomorphisms of $\mathbb{C}^2$}\label{Sec:C2}

Let us denote $\widehat{\mathrm{Diff}}(\mathbb{C}^2,0)$ the group of formal diffeomorphisms of $\mathbb{C}^2$, 
{\it i.e.} the formal completion of the group of germs of holomorphic diffeomorphisms at $0$.
For any $i$ let $\mathrm{Diff}_i$ be the quotient of $\widehat{\mathrm{Diff}}(\mathbb{C}^2,0)$ by the 
normal subgroups of formal diffeomorphisms tangent to the identity with multiplicity $i$; it can be
viewed as the set of jets of diffeomorphisms at order $i$ with the law of composition with truncation 
at order $i$. Note that~$\mathrm{Diff}_i$ is a complex linear algebraic group. One can see 
$\widehat{\mathrm{Diff}}(\mathbb{C}^2,0)$ as the projective limit of the $\mathrm{Diff}_i$'s: $\widehat{\mathrm{Diff}}(\mathbb{C}^2,0)=\displaystyle
\lim_{\leftarrow}\mathrm{Diff}_i$. 
Let us denote by $\widehat{\chi}(\mathbb{C}^2,0)$ the algebra of formal vector fields in $\mathbb{C}^2$
vanishing at $0$. One can define the set $\chi_i$ of the $i$-th jets of vector fields; one has
$\displaystyle\lim_{\leftarrow}\chi_i=\widehat{\chi}(\mathbb{C}^2,0)$.

Let $\widehat{\mathcal{O}}(\mathbb{C}^2)$ be the ring of formal series in two variables and let 
$\widehat{K}(\mathbb{C}^2)$ be its fraction field; 
$\mathcal{O}_i$ is the set of elements of $\widehat{\mathcal{O}}
(\mathbb{C}^2)$ truncated at order $i$.

The family $\big(\exp_i\colon\chi_i\to\mathrm{Diff}_i\big)_i$ is filtered, {\it i.e.} compatible with 
the truncation. We then define the exponential application as follows: $\exp=\displaystyle
\lim_{\leftarrow}\exp_i\colon\widehat{\chi}(\mathbb{C}^2,0)\to\widehat{\mathrm{Diff}}(\mathbb{C}^2,0)$.

As in the classical case, if $X$ belongs to $\widehat{\chi}(
\mathbb{C}^2,0)$, then $\exp(X)$ can be seen as the ``flow at time $1$'' of $X$. Indeed 
an element $X_i$ of $\chi_i$ can be seen as a derivation of $\mathcal{O}_i$; so it can 
be written $S_i+N_i$ where $S_i$ and $N_i$ are two semi-simple, resp. nilpotent derivations 
which commute. Taking the limit, one gets $X=S+N$ where $S$ is a semi-simple
vector field and $N$ a nilpotent one and $[S,N]=\mathrm{id}$ (\emph{see} \cite{Martinet}).
A semi-simple vector field is a formal vector field conjugate to a diagonal linear 
vector field which is complete. A vector field is nilpotent if and only if its linear part 
is; let us remark that the usual flow $\varphi_t$ of a nilpotent vector field is 
polynomial in~$t$
\begin{align*}
&\varphi_t(x)=\sum_IP_I(t)x^I,&&P_I\in(\mathbb{C}[t])^2
\end{align*}
so $\varphi_1(x)$ is well defined. As a consequence $\exp(tX)=\exp(tS)\exp(tN)$ is 
well defined for $t=1$. Note that the Jordan decomposition is purely formal: if $X$
is holomorphic, $S$ and $N$ are not necessary holomorphic.

\begin{pro}[\cite{Ghys}]\label{Pro:nilsubalg}
Any nilpotent subalgebra of $\widehat{\chi}(\mathbb{C}^2,0)$ is metabelian.
\end{pro}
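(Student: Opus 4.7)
The plan is to combine the Jordan--Chevalley decomposition $X=S+N$ recalled just above with a weight-space analysis, in order to reduce $\mathfrak{g}$ to a normal form in which metabelianity becomes transparent. After possibly enlarging $\mathfrak{g}$ inside its algebraic envelope so that it contains the Jordan components of each of its elements (an operation which preserves nilpotence), I would first show that every semi-simple $S\in\mathfrak{g}$ is central: indeed $\mathrm{ad}_S$ inherits semi-simplicity from $S$, while Engel's theorem applied to the nilpotent Lie algebra $\mathfrak{g}$ forces $\mathrm{ad}_S$ to be nilpotent on $\mathfrak{g}$, so $\mathrm{ad}_S|_{\mathfrak{g}}=0$. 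In particular the $S_X$ pairwise commute, and after a formal change of coordinates they can be simultaneously diagonalized: $S_X=\lambda(X)\,x\partial_x+\mu(X)\,y\partial_y$ for linear forms $\lambda,\mu\colon\mathfrak{g}\to\mathbb{C}$. Together they span a central abelian subalgebra $\mathfrak{t}\subseteq\mathfrak{g}$.

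Next I would decompose $\widehat{\chi}(\mathbb{C}^2,0)=\bigoplus_\omega\widehat{\chi}_\omega$ into weight spaces for the adjoint action of $\mathfrak{t}$: the monomial $x^py^q\partial_x$ has weight $(p-1)\lambda+q\mu$ and $x^py^q\partial_y$ has weight $p\lambda+(q-1)\mu$. This grading restricts to $\mathfrak{g}$. Any element of nonzero weight must be purely nilpotent, since its semi-simple part lies in $\mathfrak{t}$ and hence has weight zero; nilpotence of $\mathfrak{g}$ (again via Engel) forces the chain of weights reached from such an element by iterated bracketing inside $\mathfrak{g}$ to be finite, so only a short sequence of essentially one-dimensional weight spaces can appear. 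A case analysis on the rank of $\mathfrak{t}$ (zero, one or two) then produces a short list of possible normal forms for $\mathfrak{g}$, in each of which I would compute $[\mathfrak{g},\mathfrak{g}]$ directly and verify by hand that it is abelian.

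The main obstacle will be the resonant situation, in which $\mathfrak{t}$ is small (of rank zero, or of rank one with $\mathbb{Q}$-dependent weights): there the weight-zero subspace of $\widehat{\chi}(\mathbb{C}^2,0)$ is infinite-dimensional, and the nilpotent parts $N_X$ can a priori generate a complicated subalgebra. Here I would exploit the two-dimensionality of the ambient space to produce a common formal invariant curve, or foliation, preserved by $\mathfrak{g}$; along the leaves of such a foliation the $N_X$ reduce to formal vector fields in one variable, whose nilpotent subalgebras are abelian by the formal analogue of Proposition~\ref{Pro:nilpsubs1}. Consequently $[\mathfrak{g},\mathfrak{g}]$, built from these tame components, would be abelian as required.
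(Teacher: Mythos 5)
Your strategy (Jordan decomposition, centrality of semi-simple parts, weight spaces for a central torus, case analysis on its rank) is genuinely different from the paper's argument, but as written it has a real gap exactly where the difficulty of the proposition sits: the resonant case. When $\mathfrak{t}$ has rank $0$ or $1$ with rationally dependent weights, the centralizer of $\mathfrak{t}$ is infinite-dimensional and contains non-abelian nilpotent algebras --- for instance $\big\{P(y)\frac{\partial}{\partial x}+\alpha\frac{\partial}{\partial y}\,\vert\,\alpha\in\mathbb{C},\ \deg P\leq 2\big\}$, which is nilpotent of length $3$ and is precisely the kind of algebra the proposition must accommodate. Your proposed treatment of this case --- find a common invariant formal curve or foliation and restrict to its leaves, where one-variable nilpotent algebras are abelian --- is a non sequitur for two reasons. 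First, the existence of a \emph{common} invariant foliation for the whole algebra is asserted, not proved. Second, and more seriously, restricting to the leaves discards the transverse component: in the example above the foliation $dy=0$ is invariant and the leafwise restrictions $P(y)\frac{\partial}{\partial x}$ all commute, yet this tells you nothing about $[\mathfrak{g},\mathfrak{g}]$ being abelian; abelianity along leaves does not imply metabelianity of $\mathfrak{g}$ without a separate argument controlling the transverse parts and the kernel of the restriction map. A secondary unproved step is the enlargement of $\mathfrak{g}$ to an ``algebraic envelope'' closed under Jordan components while preserving nilpotence; this is plausible (it is essentially what the paper does at the group level by passing to Zariski closures in the truncations $\mathrm{Diff}_i$ and taking the projective limit), but it is not free.

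The paper's proof avoids all of this with a short linear-algebra argument over the fraction field $\widehat{K}(\mathbb{C}^2)$: since $\widehat{\chi}(\mathbb{C}^2,0)\otimes\widehat{K}(\mathbb{C}^2)$ has dimension $2$, the center $Z(\mathfrak{l})$ spans a subspace of dimension $1$ or $2$. In the first case one writes every element as $uX+vY$ with $X$ central and $Y$ central modulo the abelian ideal $\mathfrak{g}=\mathfrak{l}\cap\widehat{K}(\mathbb{C}^2)X$; the relations $X(u)=X(v)=Y(v)=0$ force $v$ to be constant, hence $[\mathfrak{l},\mathfrak{l}]\subset\mathfrak{g}$. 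In the second case the same computation forces $u$ and $v$ constant and $\mathfrak{l}$ is abelian. If you want to salvage your approach, the missing ingredient is a rigorous treatment of the fully resonant (all-nilpotent-parts) situation, and the fraction-field trick is precisely the tool that handles it uniformly.
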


\begin{proof}
Let $\mathfrak{l}$ be a nilpotent subalgebra of $\widehat{\chi}(\mathbb{C}^2,0)$ and let
$Z(\mathfrak{l})$ be its center. Since $\widehat{\chi}(\mathbb{C}^2,0)\otimes
\widehat{K}(\mathbb{C}^2)$ is a vector space of dimension $2$ over $\widehat{K}(\mathbb{C}^2)$
one has the following alternative:
\smallskip
\begin{itemize}
\item[$\bullet$] the dimension of the subspace generated by $Z(\mathfrak{l})$ in 
$\widehat{\chi}(\mathbb{C}^2,0)\otimes\widehat{K}(\mathbb{C}^2)$ is $1$;
\smallskip
\item[$\bullet$] the dimension of the subspace generated by $Z(\mathfrak{l})$ in 
$\widehat{\chi}(\mathbb{C}^2,0)\otimes\widehat{K}(\mathbb{C}^2)$ is $2$.
\end{itemize}
\smallskip
Let us study these different cases.

Under the first assumption there exists an element $X$ of $Z(\mathfrak{l})$ having 
the following property: any vector field of $Z(\mathfrak{l})$ can be written $uX$ 
with $u$ in $\widehat{K}(\mathbb{C}^2)$. Let us consider the subalgebra $\mathfrak{g}$ of 
$\mathfrak{l}$ given by $$\mathfrak{g}=\big\{\widetilde{X}\in\mathfrak{l}\,\vert\,\exists\,u\in\widehat{K}(\mathbb{C}^2)
,\, \widetilde{X}=uX\big\}.$$ Since $X$ belongs to $Z(\mathfrak{l})$, the algebra $\mathfrak{g}$
is abelian; it is also an ideal of $\mathfrak{l}$. Let us assume that $\mathfrak{l}$ is 
not abelian: let~$Y$ be an element of $\mathfrak{l}$ whose projection on $\mathfrak{l}/
\mathfrak{g}$ is non trivial and central. Any vector field of $\mathfrak{l}$ can be written
as $uX+vY$ with $u$, $v$ in $\widehat{K}(\mathbb{C}^2)$. As $X$ belongs to $Z(\mathfrak{l})$
and $Y$ is central modulo $\mathfrak{g}$ one has $$X(u)=X(v)=Y(v)=0.$$ The vector fields 
$\frac{\partial}{\partial x}$ and $\frac{\partial}{\partial y}$ being some linear combinations
of $X$ and $Y$ with coefficients in $\widehat{K}(\mathbb{C}^2,0)$, the partial derivatives of 
$v$ are zero so $v$ is a constant. Therefore $[\mathfrak{l},\mathfrak{l}]\subset\mathfrak{g}$; 
but $\mathfrak{g}$ is abelian so $\mathfrak{l}$ is metabelian.

In the second case $Z(\mathfrak{l})$ has two elements $X$ and $Y$ which are 
linearly independent on $\widehat{K}(\mathbb{C}^2)$. Any vector field of $\mathfrak{l}$
can be written as $uX+vY$ with $u$ and $v$ in $\widehat{K}(\mathbb{C}^2)$. Since $X$ and 
$Y$ belong to $Z(\mathfrak{l})$ one has $$X(u)=X(v)=Y(u)=Y(v)=0.$$ As a consequence
$u$ and $v$ are constant, {\it i.e.} $\mathfrak{l}\subset\{uX+vY\,\vert\, u,\, v\in
\mathbb{C}\}$; in particular $\mathfrak{l}$ is abelian.
\end{proof}

\begin{pro}[\cite{Ghys}]\label{Pro:nilpmet}
Any nilpotent subgroup of $\widehat{\mathrm{Diff}}(\mathbb{C}^2,0)$ is metabelian.
\end{pro}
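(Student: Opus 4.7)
\bigskip
\noindent\textbf{Proof plan.} The natural strategy is to lift Proposition~\ref{Pro:nilsubalg} from the Lie algebra side to the group side through the exponential map, using the algebraic-group structure of the finite-order jet groups $\mathrm{Diff}_i$. Being metabelian is a universal sentence in four variables ($[[a,b],[c,d]]=\mathrm{id}$), so it is enough to treat the case where $\mathrm{G}$ is finitely generated, and then to verify the identity truncation by truncation in each $\mathrm{Diff}_i$, since $\widehat{\mathrm{Diff}}(\mathbb{C}^2,0)=\lim_{\leftarrow}\mathrm{Diff}_i$.

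Fix $i$, and let $\mathrm{G}_i\subset\mathrm{Diff}_i$ denote the image of $\mathrm{G}$ under the truncation morphism. Because $\mathrm{Diff}_i$ is a complex linear algebraic group and the iterated commutator maps are morphisms of varieties, the Zariski closure $\overline{\mathrm{G}_i}$ of $\mathrm{G}_i$ in $\mathrm{Diff}_i$ is an algebraic subgroup of the same nilpotency class as $\mathrm{G}$. The Lie algebra $\mathfrak{h}_i=\mathrm{Lie}(\overline{\mathrm{G}_i}^0)\subset\chi_i$ is therefore a nilpotent Lie subalgebra, and the family $(\mathfrak{h}_i)_i$ is compatible with the truncations $\chi_{i+1}\to\chi_i$, so that its projective limit $\mathfrak{h}=\lim_{\leftarrow}\mathfrak{h}_i$ is a nilpotent Lie subalgebra of $\widehat{\chi}(\mathbb{C}^2,0)$. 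Proposition~\ref{Pro:nilsubalg} then applies: $\mathfrak{h}$, and consequently every $\mathfrak{h}_i$, is metabelian. Since $\overline{\mathrm{G}_i}^0$ is a connected complex algebraic group with metabelian Lie algebra, $\overline{\mathrm{G}_i}^0$ itself is a metabelian group.

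The remaining step, and the one I expect to be the main obstacle, is to descend from the identity component $\overline{\mathrm{G}_i}^0$ to all of $\overline{\mathrm{G}_i}$: the component group $\overline{\mathrm{G}_i}/\overline{\mathrm{G}_i}^0$ is finite but not obviously abelian, and metabelianness is not preserved under such an extension in general. To deal with this I would use the Jordan decomposition in $\mathrm{Diff}_i$, together with the fact that any connected nilpotent algebraic subgroup of $\mathrm{GL}(2,\mathbb{C})$ is abelian (connected nilpotent subalgebras of $\mathfrak{gl}(2,\mathbb{C})$ are abelian). Writing $\rho\colon\overline{\mathrm{G}_i}\to\mathrm{GL}(2,\mathbb{C})$ for the linear-part projection, the image $\rho(\overline{\mathrm{G}_i})$ is a nilpotent algebraic subgroup of $\mathrm{GL}(2,\mathbb{C})$; the classification of finite nilpotent subgroups of $\mathrm{GL}(2,\mathbb{C})$ (all are metabelian, since their Sylow subgroups are cyclic, dihedral, generalized quaternion or semidihedral) combined with the abelianness of the connected component yields that $\rho(\overline{\mathrm{G}_i})$ is metabelian. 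Consequently $[\overline{\mathrm{G}_i},\overline{\mathrm{G}_i}]$ lands in the kernel of $\rho$ up to an abelian piece, and since that kernel consists of diffeomorphisms tangent to the identity it lies in $\overline{\mathrm{G}_i}^0$, which is already metabelian by the previous step.

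Putting the two ingredients together shows that $[\overline{\mathrm{G}_i},\overline{\mathrm{G}_i}]$ is abelian in each $\mathrm{Diff}_i$. Taking the projective limit over $i$ gives $[[a,b],[c,d]]=\mathrm{id}$ in $\widehat{\mathrm{Diff}}(\mathbb{C}^2,0)$ for every $a,b,c,d\in\mathrm{G}$, which is the desired conclusion that $\mathrm{G}$ is metabelian.
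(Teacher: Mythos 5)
Your overall skeleton --- truncate to $\mathrm{Diff}_i$, pass to the Zariski closure $\overline{\mathrm{G}_i}$, and feed a nilpotent subalgebra of $\widehat{\chi}(\mathbb{C}^2,0)$ obtained as a projective limit into Proposition~\ref{Pro:nilsubalg} --- is exactly the paper's, and your treatment of the identity component $\overline{\mathrm{G}_i}^0$ via its Lie algebra is sound. The gap is in the step you yourself single out as the main obstacle. From ``$\rho(\overline{\mathrm{G}_i})$ is metabelian'' you only get $\overline{\mathrm{G}_i}^{(2)}=[[\overline{\mathrm{G}_i},\overline{\mathrm{G}_i}],[\overline{\mathrm{G}_i},\overline{\mathrm{G}_i}]]\subset\ker\rho\cap\overline{\mathrm{G}_i}$, and from ``that kernel sits inside the metabelian group $\overline{\mathrm{G}_i}^0$'' you only get $\overline{\mathrm{G}_i}^{(4)}=\{\mathrm{id}\}$. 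Metabelianness is not stable under extensions, so your two ingredients assemble only into solvability of derived length at most $4$, not into the abelianness of $[\overline{\mathrm{G}_i},\overline{\mathrm{G}_i}]$. (Note also that $[\overline{\mathrm{G}_i},\overline{\mathrm{G}_i}]$ need not land in $\ker\rho$ at all, since $\rho(\overline{\mathrm{G}_i})$ is only metabelian rather than abelian --- the quaternion group inside $\mathrm{GL}(2,\mathbb{C})$ is a nilpotent, non-abelian possibility for the linear part --- so the phrase ``up to an abelian piece'' is hiding precisely the difficulty.)

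The missing idea is the one the paper takes from Borel: a nilpotent complex algebraic group is the \emph{direct product} $\overline{\mathrm{G}_{i,s}}\times\overline{\mathrm{G}_{i,u}}$ of its semisimple and unipotent parts, not merely an extension of one by the other. For a direct product the derived series splits, $[\mathrm{A}\times\mathrm{B},\mathrm{A}\times\mathrm{B}]=[\mathrm{A},\mathrm{A}]\times[\mathrm{B},\mathrm{B}]$, so it suffices to treat each factor separately: $\overline{\mathrm{G}_{i,u}}$ is the exponential of a nilpotent subalgebra of $\chi_i$ and is handled by Proposition~\ref{Pro:nilsubalg} (essentially your Lie-algebra argument), while $\overline{\mathrm{G}_{i,s}}$ injects into $\mathrm{GL}(2,\mathbb{C})$ --- an element of $\mathrm{Diff}_i$ that is both semisimple and tangent to the identity is trivial --- where a nilpotent subgroup is metabelian (the fact you quote; the paper asserts abelian). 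Replacing your extension argument by this product decomposition closes the gap; without some such splitting the conclusion does not follow from what you have established.
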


\begin{proof}
Let $\mathrm{G}$ be a nilpotent subgroup of $\widehat{\mathrm{Diff}}(\mathbb{C}^2,0)$ of 
length $k$. Let us denote $\mathrm{G}_i$ the projection of $\mathrm{G}$ on~$\mathrm{Diff}_i$.
The Zariski closure $\overline{\mathrm{G}_i}$ of $\mathrm{G}_i$ in $\mathrm{Diff}_i$ is 
an algebraic nilpotent subgroup of length $k$. It is sufficient to prove that $\overline{\mathrm{G}_i}$ is metabelian.

Since $\overline{\mathrm{G}_i}$ is a complex algebraic subgroup it is the direct product 
of the subgroup $\overline{\mathrm{G}_{i,u}}$ of its unipotent elements and the subgroup 
$\overline{\mathrm{G}_{i,s}}$ of its semi-simple elements (\emph{see for example} \cite{Borel}).

An element of $\mathrm{Diff}_i$ is unipotent if and only if its linear part, which 
is in $\mathrm{GL}(2,\mathbb{C})$, is; so $\overline{\mathrm{G}_{i,s}}$ projects 
injectively onto a nilpotent subgroup of $\mathrm{GL}(2,\mathbb{C})$. 
Therefore $\overline{\mathrm{G}_{i,s}}$ is abelian.

Let us now consider $\overline{\mathrm{G}_{i,u}}$; this group is the exponential of 
a nilpotent Lie algebra $\mathfrak{l}_i$ of $\chi_i$ of length $k$. Taking the limit
one thus obtains the existence of a nilpotent subalgebra $\mathfrak{l}$ of 
$\widehat{\chi}(\mathbb{C}^2,0)$ of length $k$ such that $\exp(\mathfrak{l})$
projects onto $\overline{\mathrm{G}_{i,u}}$ for any $i$. According to 
Proposition \ref{Pro:nilsubalg} the subalgebra $\mathfrak{l}$ and thus $\overline{\mathrm{G}_{i,u}}$
are metabelian.  
\end{proof}

\section{Nilpotent subgroups of the group of analytic diffeomorphisms of $\mathbb{S}^2$}\label{Sec:S2}

\begin{pro}[\cite{Ghys}]\label{Pro:nilpfiniteorbit}
 Any nilpotent subgroup of $\mathrm{Diff}^\omega(\mathbb{S}^2)$ has a finite orbit.
\end{pro}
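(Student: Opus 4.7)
The plan is to exploit a non-trivial central element of $G$, invoke the Lefschetz fixed-point theorem, and carry out a structural analysis of its fixed-point set, reducing the one-dimensional case via Proposition~\ref{Pro:nilpsubs1}.

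First I would replace $G$ by its intersection $G \cap \mathrm{Diff}^\omega_+(\mathbb{S}^2)$, a subgroup of index at most $2$; a finite orbit of this subgroup is still a finite orbit of the original $G$. Since $G$ is nilpotent, its center is non-trivial, so pick $f \in Z(G) \setminus \{\mathrm{id}\}$. Because $f$ is orientation-preserving, its Lefschetz number equals $\chi(\mathbb{S}^2) = 2 \ne 0$, so $F := \mathrm{Fix}(f)$ is non-empty. The analyticity of $f$ and the condition $f \ne \mathrm{id}$ force $F$ to be a proper real-analytic subset of $\mathbb{S}^2$ of real dimension at most $1$, and the centrality of $f$ in $G$ makes $F$ globally $G$-invariant.

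Next, case-analyze $F$. The isolated points of $F$, together with the (finite) singular locus of its one-dimensional part, form a finite, $G$-invariant subset; if non-empty, this is already the desired finite orbit. Otherwise $F$ is smooth, a disjoint union of finitely many smoothly embedded analytic circles $C_1, \ldots, C_m$ that $G$ permutes, and after passing to a subgroup of finite index each $C_i$ is $G$-invariant. The restriction $G \to \mathrm{Diff}^\omega(C_i)$ has nilpotent image, which, after one further index-$2$ passage to orientation-preservation on $C_i$, is abelian by Proposition~\ref{Pro:nilpsubs1}. If this abelian action has a periodic orbit on some $C_i$, we are done.

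The main obstacle, and the technical heart, is the remaining case in which the restriction of $G$ acts with all orbits dense on every $C_i$ (and is thus conjugate, by Denjoy, to a dense group of rotations). My plan to finish is to analyze the transverse jet of $f$ along $C := C_i$: the normal multiplier $\lambda\colon C \to \mathbb{R}^*$ is analytic, and commutativity of $G$ with $f$ combined with the density of orbits on $C$ forces $\lambda$ to be constant, equal to some $\lambda_0$. If $\lambda_0 \ne 1$, the hyperbolic behaviour of $f$ along $C$ singles out a $G$-invariant finite set of fixed points in a complementary open disk; if $\lambda_0 = 1$ then $[G,G]$ acts trivially on $C$, so for any $h \in [G,G] \setminus \{\mathrm{id}\}$ the analytic fixed set $\mathrm{Fix}(h)$ strictly contains $C$, and we iterate the structural analysis with $h$ in place of $f$. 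Each such iteration either produces a finite $G$-orbit or strictly shrinks a nested family of $G$-invariant disks, and on the compact sphere $\mathbb{S}^2$ this descent must terminate in a finite $G$-invariant set.
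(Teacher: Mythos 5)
Your opening matches the paper's proof: pass to the orientation-preserving subgroup, take a non-trivial central element $f$, observe that $\mathrm{Fix}(f)$ is a non-empty, $G$-invariant analytic subset, and dispose of the cases where it has isolated points or a singular one-dimensional locus. The divergence — and the gap — comes in the remaining case where $\mathrm{Fix}(f)$ is a disjoint union of smooth circles and the induced (abelian) action on each circle has no periodic orbit. Your treatment of this case does not close. First, when $\lambda_0\neq 1$ you claim the hyperbolicity of $f$ along $C$ ``singles out a $G$-invariant finite set of fixed points in a complementary open disk''; but a complementary component $\mathbb{D}$ of $\mathbb{S}^2\setminus\mathrm{Fix}(f)$ contains \emph{no} fixed point of $f$ by definition, so there is no such set, and no argument is given for why any other element of $G$ would have a distinguished finite fixed set there. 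Second, in the descent for $\lambda_0=1$ you replace $f$ by some $h\in[G,G]\setminus\{\mathrm{id}\}$; such an $h$ is not central in $G$ (and $[G,G]$ may well be trivial if $G$ is abelian, in which case the descent cannot even start), so $\mathrm{Fix}(h)$ need not be $G$-invariant and the structural analysis cannot be re-run. Finally, the termination claim is unsupported: a nested family of $G$-invariant disks in $\mathbb{S}^2$ need not terminate, and its intersection can be a non-degenerate continuum, which yields no finite orbit.

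The idea you are missing is the one the paper uses precisely at this point: instead of analysing the dynamics \emph{on} the circles, look at a disk component $\mathbb{D}$ of $\mathbb{S}^2\setminus\mathrm{Fix}(\phi)$ and exploit that the central element $\phi$ acts on $\mathbb{D}$ \emph{without} fixed points. A finite-index subgroup $\Gamma$ containing $\phi$ preserves $\overline{\mathbb{D}}$, so each $\gamma\in\Gamma$ has a fixed point $m\in\overline{\mathbb{D}}$ (Brouwer); since $\phi$ commutes with $\gamma$, the points $\phi^k(m)$ are again fixed by $\gamma$, and Brouwer's plane translation theory (applied to the fixed-point-free $\phi$ on $\mathbb{D}$) forces this sequence to accumulate on $\partial\mathbb{D}$, so $\gamma$ has a fixed point on the boundary circle. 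Thus every element of $\Gamma$ acting on $\partial\mathbb{D}$ has rotation number zero, and Proposition \ref{Pro:nilpsubs1} then yields a common fixed point of $\Gamma$ on $\partial\mathbb{D}$, hence a finite orbit of $G$. In particular the ``all orbits dense on every $C_i$'' case you try to handle head-on never actually occurs, but ruling it out requires exactly this Brouwer argument, not the transverse-multiplier descent you propose.
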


\begin{proof}
Let $\mathrm{G}$  be a nilpotent subgroup of $\mathrm{Diff}^\omega(\mathbb{S}^2)$; up to finite 
index one can assume that the elements of~$\mathrm{G}$ preserve the orientation. Let $\phi$ be a 
non trivial element of $\mathrm{G}$ which commutes with $\mathrm{G}$. Let $\mathrm{Fix}(\phi)$
be the set of fixed points of $\phi$; it is a non empty analytic subspace of 
$\mathbb{S}^2$ invariant by $\mathrm{G}$. If $p$ is an isolated fixed point of $\phi$, then the 
orbit of $p$ under the action of $\mathrm{G}$ is finite. So it is sufficient to study the case
where $\mathrm{Fix}(\phi)$ only contains curves; there are thus two possibilities:
\begin{itemize}
\item[$\bullet$] $\mathrm{Fix}(\phi)$ is a singular analytic curve whose set of singular points 
is a finite orbit for $\mathrm{G}$;

\item[$\bullet$] $\mathrm{Fix}(\phi)$ is a smooth analytic curve, not necessary connected. One 
of the connected component of~$\mathbb{S}^2\setminus~\mathrm{Fix}(\phi)$ is a disk denoted 
$\mathbb{D}$. Any subgroup $\Gamma$ of finite index of $\mathrm{G}$ which contains $\phi$
fixes $\mathbb{D}$. Let us consider an element $\gamma$ of $\Gamma$ and a fixed point $m$
of $\gamma$ which is in $\overline{\mathbb{D}}$. By construction $\phi$ has no fixed point 
in $\mathbb{D}$ so according to the Brouwer Theorem $(\phi^k(m))_k$ has a limit
point on the boundary $\partial\mathbb{D}$ of $\overline{\mathbb{D}}$. Therefore $\gamma$
has at least one fixed point on $\partial\mathbb{D}$. The group $\Gamma$ thus acts on 
the circle $\partial\mathbb{D}$ and any of its elements has a fixed point on 
$\mathbb{D}$. Then $\Gamma$ has a fixed point on $\partial\mathbb{D}$ (Proposition
\ref{Pro:nilpsubs1}). 
\end{itemize}\end{proof}

\begin{thm}[\cite{Ghys}]
Any nilpotent subgroup of $\mathrm{Diff}^\omega(\mathbb{S}^2)$ is metabelian. 
\end{thm}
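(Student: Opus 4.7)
The plan is to reduce the claim to Proposition \ref{Pro:nilpmet} by combining the finite-orbit result of Proposition \ref{Pro:nilpfiniteorbit} with a Taylor-expansion embedding made injective by the real-analytic identity principle on the connected manifold $\mathbb{S}^2$.

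Let $\mathrm{G}$ be a nilpotent subgroup of $\mathrm{Diff}^\omega(\mathbb{S}^2)$. I would first apply Proposition \ref{Pro:nilpfiniteorbit} to produce a finite $\mathrm{G}$-orbit $\mathcal{O}\subset\mathbb{S}^2$. The permutation representation of $\mathrm{G}$ on $\mathcal{O}$ has kernel $\mathrm{G}'$, a normal subgroup of finite index that fixes every point of $\mathcal{O}$. Fix any $p\in\mathcal{O}$. Complexifying a real-analytic chart around $p$, the formal Taylor expansion at $p$ gives a group homomorphism
\[
\tau_p\colon\mathrm{G}'\longrightarrow\widehat{\mathrm{Diff}}(\mathbb{C}^2,0).
\]
Since elements of $\mathrm{G}'$ are real analytic and $\mathbb{S}^2$ is connected, an element whose $\infty$-jet at $p$ is trivial must be the identity in a neighbourhood of $p$ and hence everywhere by analytic continuation; thus $\tau_p$ is injective. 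As $\mathrm{G}'$ is nilpotent (being a subgroup of $\mathrm{G}$), Proposition \ref{Pro:nilpmet} applies to $\tau_p(\mathrm{G}')$ and shows that $\mathrm{G}'$ is metabelian.

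The remaining step, promoting metabelianness from the finite-index subgroup $\mathrm{G}'$ to $\mathrm{G}$, is the main delicate point: in general a nilpotent group may contain a normal metabelian subgroup of finite index without itself being metabelian. To handle this I would sharpen the embedding by taking the product of the Taylor expansions $\tau_q$ at every point $q\in\mathcal{O}$ simultaneously. This yields an embedding of $\mathrm{G}$ itself into the wreath product of $\widehat{\mathrm{Diff}}(\mathbb{C}^2,0)$ by the finite permutation group acting on $\mathcal{O}$. I would then adapt the formal-algebra argument of Proposition \ref{Pro:nilsubalg}, applied to the nilpotent Lie algebra obtained from the unipotent part of the Zariski closure of each jet group $\mathrm{G}_i$ as in Proposition \ref{Pro:nilpmet}, to the wreath-product setting: the two-dimensionality of $\widehat{\chi}(\mathbb{C}^2,0)$ as a module over its fraction field still forces the centre-based dichotomy driving Proposition \ref{Pro:nilsubalg}, and the permutation factor only contributes a finite quotient whose commutators lie in $\mathrm{G}'$. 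Controlling this wreath-product bookkeeping is where I expect the technical heart of the proof to lie.
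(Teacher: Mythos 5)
Your first half is exactly the paper's first step: produce a finite orbit via Proposition \ref{Pro:nilpfiniteorbit}, pass to the finite-index stabilizer, inject it into $\widehat{\mathrm{Diff}}(\mathbb{C}^2,0)$ by $\infty$-jets (injective by analyticity plus connectedness of $\mathbb{S}^2$), and invoke Proposition \ref{Pro:nilpmet}. You are also right to flag the passage from the finite-index subgroup to all of $\mathrm{G}$ as the genuine difficulty --- a nilpotent group with a metabelian subgroup of finite index need not be metabelian (any non-metabelian finite $p$-group already shows this). The gap is in your proposed resolution. If the orbit $\mathcal{O}$ has $N$ points, your simultaneous-jet map lands in $\widehat{\mathrm{Diff}}(\mathbb{C}^2,0)^N\rtimes P$ with $P\leq\mathrm{Sym}(\mathcal{O})$ a transitive nilpotent permutation group. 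Rerunning the argument of Proposition \ref{Pro:nilpmet} there, the unipotent part causes no trouble (being connected it lies in the product factor, and a subalgebra of a product of metabelian algebras is metabelian), but the semisimple part now only injects into $\mathrm{GL}(2,\mathbb{C})^N\rtimes P$, and nilpotent subgroups of such a group need \emph{not} be metabelian: take $P$ to be the quaternion group $Q_8$ acting regularly on $N=8$ letters; then the $2$-group $(\mathbb{Z}/2\mathbb{Z})^{8}\rtimes Q_8\subset(\mathbb{C}^*)^{8}\rtimes Q_8$ is nilpotent of derived length $3$ (its second derived subgroup contains elements of the form $(-1)\cdot b-b$ with $b$ in the augmentation submodule). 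So the target of your embedding genuinely contains non-metabelian nilpotent subgroups, and no amount of ``bookkeeping'' can show that every nilpotent subgroup there is metabelian; additional input about $\mathrm{G}$ is required.

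The paper supplies that input by first reducing to $\mathrm{G}$ finitely generated and then splitting into two cases, both absent from your sketch. If $\mathrm{G}$ is torsion-free, it and its metabelian finite-index subgroup $\Gamma$ are lattices in the same simply connected nilpotent Lie group $\mathrm{G}\otimes\mathbb{R}$, and a lattice is metabelian iff the ambient group is; this settles the case with no wreath product at all. If $\mathrm{G}$ has torsion, its torsion elements form a normal subgroup meeting the center, and a central element of finite order --- being conjugate to an isometry of $\mathbb{S}^2$ --- has exactly two fixed points (after splitting off a $\mathbb{Z}/2\mathbb{Z}$ factor in the orientation-reversing case). This produces a $\mathrm{G}$-invariant \emph{two-point} set, so the wreath product can be taken with $q=2$; the semisimple part then sits in $2\cdot\mathrm{GL}(2,\mathbb{C})$, where a nilpotent subgroup has an abelian subgroup of index $2$ and hence is metabelian. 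The reduction of the orbit to two points (respectively, the Malcev completion argument) is the idea your proposal is missing.
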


\begin{proof}
Let $\mathrm{G}$ be a nilpotent subgroup of $\mathrm{Diff}^\omega(\mathbb{S}^2)$ and let 
$\Gamma$ be a subgroup of finite index of $\mathrm{G}$ having a fixed point $m$ (such a 
subgroup exists according to Proposition \ref{Pro:nilpfiniteorbit}). One can embed
$\Gamma$ into $\widehat{\mathrm{Diff}}(\mathbb{R}^2,0)$, and so into $\widehat{\mathrm{Diff}}
(\mathbb{C}^2,0)$, by considering the jets of infinite order of elements of $\Gamma$ in 
$m$. According to Proposition~\ref{Pro:nilpmet} the group $\Gamma$ is metabelian.

One can assume that $\mathrm{G}$ is a finitely generated group.

Let us first assume that $\mathrm{G}$ has no element of finite order. Then $\mathrm{G}$
is a cocompact lattice of the nilpotent, simply connected Lie group $\mathrm{G}
\otimes\mathbb{R}$ (\emph{see} \cite{Raghunathan}). The group $\mathrm{G}$ is metabelian if and 
only if $\mathrm{G}\otimes\mathbb{R}$ is; but $\Gamma$ is metabelian so $\mathrm{G}
\otimes\mathbb{R}$ also.

Finally let us consider the case where $\mathrm{G}$ has at least one element of finite order. 
The set of such elements is a normal subgroup of $\mathrm{G}$ which thus intersects
non trivially the center $Z(\mathrm{G})$ of $\mathrm{G}$. Let us consider a non trivial element $\phi$ of 
$Z(\mathrm{G})$ which has finite order. Let us recall that a finite group of diffeomorphisms of
the sphere is conjugate to a group of isometries. Denote by $\mathrm{G}^+$
the subgroup of elements of $\mathrm{G}$ which preserve the orientation. It is thus
sufficient to prove that $\mathrm{G}^+$ is metabelian; indeed if $\phi$ does not preserve
the orientation, $\phi$ has order $2$ and $\mathrm{G}=\mathbb{Z}/
2\mathbb{Z}\times\mathrm{G}^+$. So let us assume that $\phi$ preserves the 
orientation; $\phi$ is conjugate to a direct isometry of $\mathbb{S}^2$ and has
exactly two fixed points on the sphere. The group $\mathrm{G}$ has thus an invariant set of two 
elements. By considering germs in the neighborhood of these two
points, one gets that $\mathrm{G}$ can be embedded into $2\cdot \mathrm{Diff}
(\mathbb{R}^2,0)$\footnote{Let $\mathrm{G}$ be a group and let $q$ be a 
positive integer; $q\cdot \mathrm{G}$ denotes the semi-direct product of 
$\mathrm{G}^q$ by $\mathbb{Z}/q\mathbb{Z}$ under the action of the cyclic
permutation of the factors.} and thus into $2\cdot\mathrm{Diff}(\mathbb{C}^2,0)$:
$$1\longrightarrow\mathrm{Diff}(\mathbb{C}^2,0)\longrightarrow 2\cdot
\mathrm{Diff}(\mathbb{C}^2,0)\longrightarrow\mathbb{Z}/2\mathbb{Z}\longrightarrow
0.$$
Let us remark that $2\cdot\mathrm{Diff}(\mathbb{C}^2,0)$ is the projective
limit of the algebraic groups $2\cdot\mathrm{Diff}_i$. The end of the proof
is thus the same as the proof of Proposition \ref{Pro:nilpmet} except that the subgroup 
of the semi-simple elements of $2\cdot \mathrm{Diff}_i$ embeds now in $2\cdot
\mathrm{GL}(2,\mathbb{C})$; it is metabelian because it contains an abelian 
subgroup of index~$2$.
\end{proof}

Let $\Gamma$ be a subgroup of finite index of $\mathrm{SL}(n,\mathbb{Z})$ for $n\geq 5$. 
Since $\Gamma$ contains nilpotent subgroups of finite index of length $n-1$ (for 
example the group of upper triangular unipotent matrices) which are not metabelian
one gets the following statement.

\begin{cor}[\cite{Ghys}]
Let $\Gamma$ be a subgroup of finite index of $\mathrm{SL}(n,\mathbb{Z})$; as soon as $n\geq 5$ 
there is no embedding of $\Gamma$ into $\mathrm{Diff}^\omega(\mathbb{S}^2)$. 
\end{cor}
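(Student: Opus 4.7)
The plan is to derive the corollary by combining the preceding theorem (every nilpotent subgroup of $\mathrm{Diff}^\omega(\mathbb{S}^2)$ is metabelian) with the existence, inside every finite-index subgroup of $\mathrm{SL}(n,\mathbb{Z})$, of a nilpotent subgroup of length $n-1$. Since $n-1 \geq 4$ when $n \geq 5$, such a subgroup cannot be metabelian, and this will give the required contradiction.

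Concretely, let $\mathrm{U}_n(\mathbb{Z}) \subset \mathrm{SL}(n,\mathbb{Z})$ be the subgroup of upper-triangular matrices with $1$'s on the diagonal. This is a finitely generated, torsion-free, nilpotent group of length exactly $n-1$, the lower central series being read off from the standard flag on $\mathbb{Z}^n$. First I would set $\mathrm{N} = \Gamma \cap \mathrm{U}_n(\mathbb{Z})$, which has finite index in $\mathrm{U}_n(\mathbb{Z})$ since $\Gamma$ has finite index in $\mathrm{SL}(n,\mathbb{Z})$. The key sub-step is to check that $\mathrm{N}$ still has nilpotence length $n-1$: this follows because $\mathrm{U}_n(\mathbb{Z})$ is a lattice in the real unipotent Lie group $\mathrm{U}_n(\mathbb{R})$, any finite-index subgroup is again such a lattice, and the derived series of a lattice in a simply connected nilpotent Lie group has the same length as that of the ambient group. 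Alternatively one can argue directly: if $E_{ij}$ denote the elementary unipotent generators, some fixed powers $E_{ij}^{m}$ still lie in $\mathrm{N}$, and the iterated commutator $[E_{12}^{m},[E_{23}^{m},[\ldots,E_{n-1,n}^{m}]\ldots]]$ is a non-trivial element of $\mathrm{N}^{(n-2)}$, which prevents $\mathrm{N}$ from being metabelian as soon as $n-2 \geq 2$, i.e.\ $n \geq 4$.

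Now suppose for contradiction that there were an embedding $\Gamma \hookrightarrow \mathrm{Diff}^\omega(\mathbb{S}^2)$. The image of $\mathrm{N}$ would be a nilpotent subgroup of $\mathrm{Diff}^\omega(\mathbb{S}^2)$ of length $n-1 \geq 4$. By the theorem of the previous section, this image is metabelian; but $\mathrm{N}$ is isomorphic to its image and, by the previous paragraph, is not metabelian for $n \geq 5$. This contradiction establishes the corollary.

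The only genuinely non-formal ingredient in the argument is the verification that the nilpotence length is preserved on passing to a finite-index subgroup of $\mathrm{U}_n(\mathbb{Z})$; everything else is a direct citation of the structural theorem. This is the step I would be most careful about, but it is standard and can be handled either by Malcev/Raghunathan theory (cocompact lattices in a simply connected nilpotent Lie group have the same nilpotence class as the group itself) or by the explicit commutator computation sketched above.
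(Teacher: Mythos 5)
Your overall route is the same as the paper's: the paper derives the corollary in one line from the theorem that nilpotent subgroups of $\mathrm{Diff}^\omega(\mathbb{S}^2)$ are metabelian, by observing that $\Gamma$ contains, up to finite index, a non-metabelian nilpotent group, namely a finite-index subgroup of the unipotent upper triangular group $\mathrm{U}_n(\mathbb{Z})$. However, the step where you certify non-metabelianity contains a genuine flaw. You argue that since the nilpotence length is $n-1\geq 4$ ``such a subgroup cannot be metabelian,'' and you back this up with the iterated commutator $[E_{12}^{m},[E_{23}^{m},[\ldots,E_{n-1,n}^{m}]\ldots]]$, a non-trivial element of $\mathrm{N}^{(n-2)}$. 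This conflates the lower central series with the derived series: non-vanishing of $\mathrm{N}^{(n-2)}$ bounds the nilpotence class from below but says nothing about the second derived subgroup, and there exist metabelian nilpotent groups of arbitrarily large nilpotency class, so ``nilpotent of length $\geq 4$'' does not imply ``not metabelian.'' Indeed, your own stronger claim that the argument already rules out metabelianity ``as soon as $n\geq 4$'' is false: $\mathrm{U}_4(\mathbb{Z})$ is nilpotent of length $3$ and \emph{is} metabelian (its derived subgroup consists of matrices $I+aE_{13}+bE_{24}+cE_{14}$, which commute pairwise), and this is precisely why the case $n=4$ requires the entirely different argument that occupies the last section of the paper.

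The gap is easy to close for $n\geq 5$, which is all the corollary asserts: one must exhibit a non-trivial element of the \emph{second derived} subgroup. Using $[I+aE_{ij},I+bE_{jk}]=I+abE_{ik}$ one gets
$$\big[[E_{12}^{m},E_{23}^{m}],[E_{34}^{m},E_{45}^{m}]\big]=[I+m^2E_{13},\,I+m^2E_{35}]=I+m^4E_{15}\neq I,$$
which lies in $\mathrm{N}_{(2)}$ and visibly requires the five indices $1,\dots,5$, hence $n\geq 5$. With that substitution, the rest of your argument --- passing to $\mathrm{N}=\Gamma\cap\mathrm{U}_n(\mathbb{Z})$ via powers of the elementary matrices and then applying the structural theorem to the image of $\mathrm{N}$ --- is correct and coincides with the paper's (one-sentence) proof.
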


\section{Nilpotent subgroups of length $3$ of the group of analytic diffeomorphisms of $\mathbb{S}^2$}
 
Let us precise Proposition \ref{Pro:nilsubalg} for nilpotent subalgebras of length $3$ of $\widehat{\chi}
(\mathbb{C}^2,0)$. Let $\mathfrak{l}$ be such an algebra. The center $Z(\mathfrak{l})$ of $\mathfrak{l}$
generates a subspace of dimension at most $1$ of $\widehat{\chi}(\mathbb{C}^2,0)\otimes\widehat{K}
(\mathbb{C}^2)$, for else $\mathfrak{l}$ would be abelian (Proposition \ref{Pro:nilsubalg}) and this is impossible under our 
assumptions. So let us assume that the dimension of the subspace generated by $Z(\mathfrak{l})$ in 
$\widehat{\chi}(\mathbb{C}^2,0)\otimes\widehat{K}(\mathbb{C}^2)$ is $1$. There exists an element $X$ in 
$Z(\mathfrak{l})$ with the following property: any element of~$Z(\mathfrak{l})$ can be written $uX$ with $u$ in 
$\widehat{K}(\mathbb{C}^2)$. Let $\mathfrak{g}$ denote the abelian ideal of $\mathfrak{l}$ 
defined by $$\mathfrak{g}=\big\{\widetilde{X}\in\mathfrak{l}\,\big\vert\, \exists\, u\in\widehat{K}(\mathbb{C}^2),
\, \widetilde{X}=uX\big\}.$$ By hypothesis $\mathfrak{l}$ is not abelian. Let $Y$ be in $\mathfrak{l}$; assume that
its projection onto $\mathfrak{l}/\mathfrak{g}$ is a non trivial element of~$Z(\mathfrak{l}/\mathfrak{g})$.
Any vector field of $\mathfrak{l}$ can be written 
\begin{align*}
& uX+vY, && u,\,v\in \widehat{K}(\mathbb{C}^2).
\end{align*}
Since $X$, resp. $Y$ belongs to $Z(\mathfrak{l})$, resp. $Z(\mathfrak{l}/\mathfrak{g})$ and since the
length of $\mathfrak{l}$ is $3$, one has 
\begin{equation}\label{eqlength3}
X(u)=Y^3(u)=X(v)=Y(v)=0.
\end{equation}

 If $X$ and $Y$ are non singular, one can choose formal coordinates $x$ and $y$ such that 
$X=\frac{\partial}{\partial x}$ and~$Y=\frac{\partial}{\partial y}$. The previous conditions
can be thus translated as follows: $v$ is a constant and $u$ is a polynomial in $y$ of degree $2$.
We will see that we have a similar property without assumption on $X$ and $Y$.

\begin{lem}
Let $X$ and $Y$ be two vector fields of $\widehat{\chi}(\mathbb{C}^2,0)$ that commute and are not colinear.
One can assume that $(X,Y)=\Big(\frac{\partial}{\partial\tilde{x}},\frac{\partial}{\partial
\tilde{y}}\Big)$ where $\widetilde{x}$ and $\widetilde{y}$ are two independent variables in 
a Liouvillian extension of~$\widehat{K}(\mathbb{C}^2,0)$.
\end{lem}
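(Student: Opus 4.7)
The plan is to produce two closed $1$-forms dual to $(X,Y)$ with coefficients in $\widehat{K}(\mathbb{C}^2)$, and then integrate them in a Liouvillian extension. Writing $X=a\partial_x+b\partial_y$ and $Y=c\partial_x+d\partial_y$, linear independence over $\widehat{K}(\mathbb{C}^2)$ is exactly the condition $\Delta:=ad-bc\neq 0$, so the dual $1$-forms
$$\omega_1=\frac{d\,dx-b\,dy}{\Delta}, \qquad \omega_2=\frac{-c\,dx+a\,dy}{\Delta}$$
are well defined over $\widehat{K}(\mathbb{C}^2)$ and satisfy $\omega_i(X_j)=\delta_{ij}$ with the convention $(X_1,X_2)=(X,Y)$.

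Closedness of $\omega_i$ encodes exactly the hypothesis $[X,Y]=0$: Cartan's formula reads
$$d\omega_i(X,Y)=X(\omega_i(Y))-Y(\omega_i(X))-\omega_i([X,Y]),$$
and each of the three terms vanishes, the first two because $\omega_i(X_j)\in\{0,1\}$ is constant, and the third by hypothesis. Since $(X,Y)$ is a $\widehat{K}(\mathbb{C}^2)$-basis of derivations, this single evaluation forces $d\omega_i=0$. One then chooses a Liouvillian extension $L$ of $\widehat{K}(\mathbb{C}^2)$ containing primitives $\tilde{x}, \tilde{y}\in L$ with $d\tilde{x}=\omega_1$ and $d\tilde{y}=\omega_2$. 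Extending $X$ and $Y$ to $L$ as derivations, one reads off
$$X(\tilde{x})=\omega_1(X)=1,\quad Y(\tilde{x})=\omega_1(Y)=0,\quad X(\tilde{y})=0,\quad Y(\tilde{y})=1,$$
so that $X=\partial/\partial\tilde{x}$ and $Y=\partial/\partial\tilde{y}$ in the pair $(\tilde{x},\tilde{y})$; independence of $\tilde{x},\tilde{y}$ follows from $d\tilde{x}\wedge d\tilde{y}=\omega_1\wedge\omega_2\neq 0$.

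The main obstacle is the integration step: one must know that a closed $1$-form over $\widehat{K}(\mathbb{C}^2)$ really admits a primitive in some Liouvillian extension. Morally this is Liouville's theorem on integration in finite terms, transposed from the meromorphic to the formal category. Concretely one decomposes $\omega_i$ as a finite sum $\sum_j\lambda_j\,df_j/f_j+dg$ (a logarithmic part plus an exact part), whose primitive $\sum_j\lambda_j\log f_j+g$ lies in a Liouvillian extension by definition; the decomposition is purely algebraic, so no convergence issue intervenes. Everything else in the argument is bookkeeping with Cartan's formula and the duality $\omega_i(X_j)=\delta_{ij}$.
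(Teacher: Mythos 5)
Your proof follows essentially the same route as the paper: the dual $1$-forms $\omega_1,\omega_2$, closedness deduced from $[X,Y]=0$ via Cartan's formula, and integration after decomposing each closed form into a logarithmic part plus an exact part. The one step you wave through --- that every closed $1$-form with coefficients in $\widehat{K}(\mathbb{C}^2)$ admits a decomposition $\sum_j\lambda_j\,df_j/f_j+dg$ --- is exactly the non-trivial input the paper supplies by citing Cerveau--Mattei, so it warrants a reference rather than the assertion that it is ``purely algebraic''.
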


\begin{proof}
Since $X$ and $Y$ are non colinear, there exist two $1$-forms $\alpha$, $\beta$ with coefficients
in $\widehat{K}(\mathbb{C}^2)$ such that 
\begin{align*}
& \alpha(X)=1, && \alpha(Y)=0, && \beta(X)=0, && \beta(X)=1. 
\end{align*}
The vector fields $X$ and $Y$ commute if and only if $\alpha$ and $\beta$ are closed (this 
statement of linear algebra is true for convergent meromorphic vector fields and is also true
in the completion). The $1$-form $\alpha$ is closed so according to \cite{CerveauMattei} one has
$$\alpha=\sum_{i=1}^r\lambda_i\frac{d\widehat{\phi}_i}{\widehat{\phi}_i}+d\Big(\frac{\widehat{\psi}_1}{
\widehat{\psi}_2}\Big)=d\Big(\sum_{i=1}^r\lambda_i\log\widehat{\phi}_i+\frac{\widehat{\psi}_1}{
\widehat{\psi}_2}\Big)$$ where $\widehat{\psi}_1$, $\widehat{\psi}_2$ and the $\widehat{\phi}_i$ denote
some formal series and the $\lambda_i$ some complex numbers. One has a similar expression for 
$\beta$. So there exists a Liouvillian extension $\kappa$ of $\widehat{K}(\mathbb{C}^2)$ having two
elements $\widetilde{x}$ and $\widetilde{y}$ with $\alpha=d\widetilde{x}$ and $\beta=d\widetilde{y}$.
One thus has 
\begin{align*}
&X(\widetilde{x})=1, &&X(\widetilde{y})=0, &&Y(\widetilde{x})=0, &&Y(\widetilde{y})=1. 
\end{align*}
\end{proof}

From (\ref{eqlength3}) one gets: $v$ is a constant and $u$ is a polynomial in 
$\widetilde{y}$ of degree $2$; so one proves the following statement.

\begin{pro}\label{Pro:nilpsubalgl3}
Let $\mathfrak{l}$ be a nilpotent subalgebra of $\widehat{\chi}(\mathbb{C}^2,0)$ of length $3$. 
Then $\mathfrak{l}$ is isomorphic to a subalgebra~of 
$$\mathfrak{n}=\Big\{P(\widetilde{y})\frac{\partial}{\partial\widetilde{x}}+\alpha
\frac{\partial}{\partial\widetilde{y}}\,\,\Big\vert\,\,\alpha\in\mathbb{C},\, P\in\mathbb{C}[\widetilde{y}],\,\deg P=2\Big\}.$$ 
\end{pro}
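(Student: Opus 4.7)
The plan is to complete the argument already set up in the paragraphs immediately preceding the lemma. That preamble reduces us to the following picture: $\mathfrak{l}$ is not abelian (else it would have length $\leq 1$), the subspace of $\widehat{\chi}(\mathbb{C}^2,0)\otimes\widehat{K}(\mathbb{C}^2)$ generated by $Z(\mathfrak{l})$ is one-dimensional (by Proposition \ref{Pro:nilsubalg} it cannot be two-dimensional), we have fixed $X\in Z(\mathfrak{l})$ spanning that line, the associated abelian ideal $\mathfrak{g}$, a lift $Y\in\mathfrak{l}\setminus\mathfrak{g}$ of a non-trivial central element of $\mathfrak{l}/\mathfrak{g}$, and every element of $\mathfrak{l}$ takes the form $uX+vY$ with $u,v\in\widehat{K}(\mathbb{C}^2)$ satisfying the relations (\ref{eqlength3}).

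The next step is to feed $X,Y$ into the lemma. They commute because $X$ is central in $\mathfrak{l}$, and they are $\widehat{K}(\mathbb{C}^2)$-linearly independent because $Y\notin\mathfrak{g}$. The lemma produces coordinates $\widetilde{x},\widetilde{y}$ in a Liouvillian extension $\kappa$ of $\widehat{K}(\mathbb{C}^2)$ for which $X=\partial/\partial\widetilde{x}$ and $Y=\partial/\partial\widetilde{y}$. Working inside the Lie algebra of vector fields with coefficients in $\kappa$, the four conditions in (\ref{eqlength3}) become elementary differential equations:
$$\frac{\partial v}{\partial\widetilde{x}}=\frac{\partial v}{\partial\widetilde{y}}=0,\qquad \frac{\partial u}{\partial\widetilde{x}}=0,\qquad \frac{\partial^{3} u}{\partial\widetilde{y}^{3}}=0.$$
The first pair forces $v\in\mathbb{C}$, and the second pair forces $u$ to be a polynomial in $\widetilde{y}$ alone of degree at most $2$ with complex coefficients. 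Hence every element of $\mathfrak{l}$ has the form $P(\widetilde{y})\,\partial/\partial\widetilde{x}+\alpha\,\partial/\partial\widetilde{y}$ with $\alpha\in\mathbb{C}$ and $\deg P\leq 2$, so $\mathfrak{l}$ embeds as a Lie subalgebra of $\mathfrak{n}$; the embedding preserves brackets since it is induced by a change of coordinates in $\kappa$.

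The one subtle point, and the place where one must be slightly careful, is the identification of the field of constants of $\kappa$: to conclude that $v$ and the coefficients of $P$ really live in $\mathbb{C}$ rather than in some larger subfield of $\kappa$ stabilised by both $\partial/\partial\widetilde{x}$ and $\partial/\partial\widetilde{y}$, one needs that a two-dimensional Liouvillian extension built to straighten a pair of commuting non-colinear vector fields has field of constants exactly $\mathbb{C}$. Once this verification is in hand, the proposition follows at once by the translation sketched above.
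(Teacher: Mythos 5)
Your proposal is correct and coincides with the paper's own proof, which consists precisely of the preamble you cite, the straightening lemma applied to $X$ and $Y$, and the one-line observation that the conditions (\ref{eqlength3}) then force $v$ to be constant and $u$ to be a polynomial of degree $2$ in $\widetilde{y}$. The only difference is that you explicitly flag the field-of-constants question for the Liouvillian extension, a point the paper leaves implicit.
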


\begin{rem}
 We use a real version of this statement whose proof is an adaptation of the previous one:
a nilpotent subalgebra $\mathfrak{l}$ of length $3$ of $\widehat{\chi}(\mathbb{R}^2,0)$ 
is isomorphic to a subalgebra of 
$$\mathfrak{n}=\Big\{P(\widetilde{y})\frac{\partial}{\partial\tilde{x}}+\alpha\frac{\partial}{\partial\tilde{y}}\,
\Big\vert\,\alpha\in\mathbb{R},\, P\in\mathbb{R}[\widetilde{y}],\,\deg P=2\Big\}.$$
\end{rem}

\begin{thm}
Let $\Gamma$ be a subgroup of finite index of $\mathrm{SL}(n,\mathbb{Z})$; as soon as $n\geq 4$ 
there is no embedding of~$\Gamma$ into $\mathrm{Diff}^\omega(\mathbb{S}^2)$. 
\end{thm}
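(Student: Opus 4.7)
The plan is to assume an embedding $\rho\colon\Gamma\hookrightarrow\mathrm{Diff}^\omega(\mathbb{S}^2)$ exists and to extract from it a nilpotent subalgebra of $\widehat{\chi}(\mathbb{C}^2,0)$ of length~$3$ whose dimension violates the bound implicit in Proposition~\ref{Pro:nilpsubalgl3}. Let $U_4(\mathbb{Z})$ denote the group of upper triangular unipotent matrices of $\mathrm{SL}(4,\mathbb{Z})$ and set $N=\Gamma\cap U_4(\mathbb{Z})$. This is a torsion-free nilpotent group of class exactly~$3$ and a cocompact lattice in its Malcev completion $\mathcal{N}\cong U_4(\mathbb{R})$, whose real Lie algebra $\mathfrak{g}=\mathfrak{u}_4$ has dimension~$6$. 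The decisive structural point is that the centre $Z(\mathfrak{g})$ is one-dimensional and coincides with $\mathfrak{g}^{(2)}=[\mathfrak{g},[\mathfrak{g},\mathfrak{g}]]=\mathbb{R}\,E_{14}$, and that the intersection $N\cap Z(\mathcal{N})$ is an infinite cyclic lattice in $Z(\mathcal{N})\cong\mathbb{R}$.

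Applying Proposition~\ref{Pro:nilpfiniteorbit} to $\rho(N)$ and shrinking $N$ to a further finite-index subgroup, one may assume that $\rho(N)$ fixes a point $m\in\mathbb{S}^2$; taking infinite jets at $m$ yields an injection $\phi\colon N\hookrightarrow\widehat{\mathrm{Diff}}(\mathbb{R}^2,0)\hookrightarrow\widehat{\mathrm{Diff}}(\mathbb{C}^2,0)$. The critical step is to upgrade $\phi$ to a Lie-algebra homomorphism. Following the strategy of Proposition~\ref{Pro:nilpmet}, for each~$i$ I would consider the Zariski closure $H_i$ of $\phi(N)$ in the complex algebraic group $\mathrm{Diff}_i$: this is a nilpotent algebraic group of class at most~$3$ with Lie algebra $\mathfrak{h}_i\subset\chi_i$. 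By Malcev rigidity, the homomorphism $N\to H_i$ extends uniquely to a homomorphism of Lie groups $\mathcal{N}\to H_i$, whose differential $d\phi_i\colon\mathfrak{g}\to\mathfrak{h}_i$ is a Lie-algebra homomorphism compatible with truncations; the inverse limit is a Lie-algebra homomorphism $d\phi\colon\mathfrak{g}\to\widehat{\chi}(\mathbb{C}^2,0)$ whose image $\mathfrak{l}$ is a nilpotent subalgebra of class at most~$3$.

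The argument is then closed by a single sharp observation. The kernel of $d\phi$ is an ideal of $\mathfrak{g}$; since in a nilpotent Lie algebra every non-zero ideal meets the centre non-trivially, $\ker d\phi\neq 0$ would force $Z(\mathfrak{g})=\mathbb{R}\,E_{14}\subset\ker d\phi$, whence, integrating, $\phi$ would vanish on the non-trivial cyclic lattice $N\cap Z(\mathcal{N})$, contradicting its injectivity. The same remark applied to $\mathfrak{g}^{(2)}=Z(\mathfrak{g})$ shows that $\mathfrak{l}$ has class exactly~$3$. Hence $\mathfrak{l}$ is a nilpotent subalgebra of $\widehat{\chi}(\mathbb{C}^2,0)$ of length~$3$ and dimension~$6$, forced by Proposition~\ref{Pro:nilpsubalgl3} to embed in the $4$-dimensional model algebra $\mathfrak{n}$; this is the contradiction. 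The main technical obstacle I expect is the Malcev-extension step itself: one must control the semi-simple part of $H_i$ (which, as in the proof of Proposition~\ref{Pro:nilpmet}, reduces to a central complex torus and does not affect the nilpotency class), verify compatibility with the projective system $\{\mathrm{Diff}_i\}$, and handle the possible non-simple-connectedness of $H_i$ by lifting to universal covers.
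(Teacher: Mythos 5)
Your proposal follows the paper's proof quite closely: same reduction to the unipotent group $\mathrm{U}(4,\mathbb{Z})$ of class $3$, same use of Proposition \ref{Pro:nilpfiniteorbit} to get a fixed point and pass to infinite jets, same Malcev extension of the truncated representations to $\mathrm{U}(4,\mathbb{R})$ followed by differentiation to produce a nilpotent subalgebra of the formal vector fields, and same final appeal to Proposition \ref{Pro:nilpsubalgl3}. Your endgame is a genuine (and valid) variant: you show $\ker D\widetilde{\rho}=0$ by observing that a nonzero ideal of $\mathfrak{u}(4,\mathbb{R})$ must contain the one-dimensional centre $\mathbb{R}\,\delta_{14}$, which would force $\rho$ to kill the infinite cyclic group $N\cap Z(\mathcal{N})$, contradicting injectivity of the jet embedding; then $6=\dim\mathfrak{u}(4,\mathbb{R})>4=\dim\mathfrak{n}$ finishes. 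The paper instead accepts a kernel, identifies it as the $2$-dimensional ideal $\langle\delta_{14},a\delta_{13}+b\delta_{24}\rangle$, and compares $\dim Z(\mathfrak{u}(4,\mathbb{R})/\ker\psi)=2$ with $\dim Z(\mathfrak{n})=1$. Your version is arguably cleaner and both are correct.

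The one real gap is precisely the step you defer. The Malcev extension theorem (existence \emph{and} uniqueness, the latter being what makes the family $(\widetilde{\rho_i})_i$ filtered so that the inverse limit $D\widetilde{\rho}$ exists) requires the target to be a connected, simply connected nilpotent Lie group; the Zariski closure $H_i$ of $\rho_i(N)$ need not be one, because of its semi-simple (torus) part. Your suggestion to ``lift to universal covers'' is not the right mechanism: a homomorphism $N\to H_i$ need not lift to the universal cover, and even when it does (e.g.\ through the free abelianization onto the torus factor) the lift is not unique, which destroys the compatibility of the projective system. The paper's actual device is different and is where the work happens: one first observes that $j^1(\rho(\mathrm{U}(4,\mathbb{Z})))$ is, up to conjugation, contained in the matrices $\bigl[\begin{smallmatrix}\lambda & t\\ 0 & \lambda\end{smallmatrix}\bigr]$ and, up to index $2$, in the connected simply connected group $\mathrm{T}$ with $\lambda>0$; one then extends into $\mathrm{Diff}_i(\mathrm{T})=\{f\mid j^1(f)\in\mathrm{T}\}$, a connected, simply connected algebraic group, where the extension exists and is unique, and only afterwards takes Zariski closures. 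You should replace the universal-cover idea by this normalization of the linear part; with that substitution your argument goes through.
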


\begin{proof}
Let $\mathrm{U}(4,\mathbb{Z})$ (resp. $\mathrm{U}(4,\mathbb{R})$) be the subgroup of unipotent upper triangular
matrices of $\mathrm{SL}(4,\mathbb{Z})$ (resp.~$\mathrm{SL}(4,\mathbb{R})$); it is a nilpotent subgroup of 
length $3$. Assume that there exists an embedding from a subgroup $\Gamma$ of finite index of $\mathrm{SL}(4,
\mathbb{Z})$ into $\mathrm{Diff}^\omega(\mathbb{S}^2)$. Up to finite index $\Gamma$ contains $\mathrm{U}(4,
\mathbb{Z})$. Let us set~$\mathrm{H}=~\rho(\mathrm{U}(4,\mathbb{Z}))$. Up to finite index $\mathrm{H}$ has 
a fixed point (Proposition \ref{Pro:nilpfiniteorbit}). One can thus see $\mathrm{H}$ as a subgroup of
$\mathrm{Diff}(\mathbb{R}^2,0)\subset\widehat{\mathrm{Diff}}(\mathbb{R}^2,0)$ up to finite index.

Let us denote $j^1$ the morphism from $\widehat{\mathrm{Diff}}(\mathbb{R}^2,0)$ to $\mathrm{Diff}_i$.
Up to conjugation $j^1(\rho(\mathrm{U}(4,\mathbb{Z})))$ is a subgroup of $$\Big\{\left[\begin{array}{cc}
\lambda & t \\
0 & \lambda                                                                                        
\end{array}
\right]\,\Big\vert\,\lambda\in\mathbb{R}^*,\, t\in\mathbb{R}\Big\}.$$ Up to index $2$ one can thus
assume that $j^1\circ\rho$ takes values in the connected, simply connected group $\mathrm{T}$ 
defined by $$\mathrm{T}=\Big\{\left[\begin{array}{cc}
\lambda & t \\
0 & \lambda                                                                                        
\end{array}
\right]\,\Big\vert\,\lambda,\,t\in\mathbb{R},\,\lambda>0\Big\}.$$ Let us set 
$$\mathrm{Diff}_i(\mathrm{T})=\big\{f\in\mathrm{Diff}_i\,\vert\, j^1(f)\in\mathrm{T}\big\};$$ 
the group $\mathrm{Diff}_i(\mathrm{T})$ is a connected, simply connected, nilpotent and algebraic group.
The morphism $$\rho_i\colon\mathrm{U}(4,\mathbb{Z})\to\mathrm{Diff}_i$$ can be extended to a unique
continuous morphism $\widetilde{\rho_i}\colon\mathrm{U}(4,\mathbb{R})\to\mathrm{Diff}_i(\mathrm{T})$ 
(\emph{see} \cite{Malcev1, Malcev2}) so to an algebraic morphism\footnote{Let $\mathrm{N}_1$ and $\mathrm{N}_2$ be two connected, 
simply connected, nilpotent and algebraic subgroups on $\mathbb{R}$; any continuous morphism 
between $\mathrm{N}_1$ and $\mathrm{N}_2$ is algebraic.}. Let us note that $\widetilde{\rho_i}
(\mathrm{U}(4,\mathbb{Z}))$ is an algebraic subgroup of $\mathrm{Diff}_i(\mathrm{T})$ which contains
$\rho_i(\mathrm{U}(4,\mathbb{Z}))$; in particular $\overline{\mathrm{H}_i}=\overline{\rho_i(\mathrm{U}(4,
\mathbb{Z}))}\subset\widetilde{\rho_i}(\mathrm{U}(4,\mathbb{R}))$. By construction the family $(\mathrm{H}_i)_i$
is filtered; since the extension is unique, the family $(\widetilde{\rho_i})_i$ is also filtered. Therefore
$\displaystyle\mathrm{K}=\lim_{\leftarrow}\overline{\mathrm{H}_i}$ is well defined. Since $\rho$ is injective, $\mathrm{H}$
is a nilpotent subgroup of length $3$; as $\mathrm{H}\subset\mathrm{K}$ and as any $\overline{\mathrm{H}_i}$ is 
nilpotent of length at most $3$ the group $\mathrm{K}$ is nilpotent of length at most $3$. For $i$ sufficiently 
large $\widetilde{\rho_i}(\mathrm{U}(4,\mathbb{R}))$ is nilpotent of length $3$; this group is connected so 
its Lie algebra is also nilpotent of length $3$. Therefore the image of 
$$D\widetilde{\rho}:=\lim_{\leftarrow}D\widetilde{\rho_i}\colon\mathfrak{u}(4,\mathbb{R})\to\widehat{\chi}
(\mathbb{R}^2,0)$$ is isomorphic to $\mathfrak{n}$ (Proposition \ref{Pro:nilpsubalgl3}). So there exists 
a surjective map $\psi$ from $\mathfrak{u}(4,\mathbb{R})$ onto $\mathfrak{n}$. The kernel of~$\psi$ is 
an ideal of $\mathfrak{u}(4,\mathbb{R})$ of dimension $2$; hence $\ker\psi=\langle \delta_{14},a\delta_{13}
+b\delta_{24}\rangle$ where the $\delta_{ij}$ denote the Kronecker matrices. One concludes by remarking that 
$\dim Z(\mathfrak{u}(4,\mathbb{R})/\ker\psi)=2$ whereas $\dim Z(\mathfrak{n})=1$. 
\end{proof}

\begin{cor}
The image of a morphism from a subgroup of $\mathrm{SL}(n,\mathbb{Z})$ of finite index to $\mathrm{Diff}^\omega
(\mathbb{S}^2)$ is finite as soon as $n\geq 4$.
\end{cor}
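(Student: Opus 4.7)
The plan is to mirror the argument for $n\geq 5$ but to use the finer information on nilpotent subalgebras of length $3$ provided by Proposition \ref{Pro:nilpsubalgl3}. Concretely, I would focus on $\mathrm{U}(4,\mathbb{Z})$, the group of unipotent upper-triangular matrices in $\mathrm{SL}(4,\mathbb{Z})$, which is nilpotent of length exactly $3$ and which, up to finite index, is contained in any finite-index $\Gamma\subset\mathrm{SL}(4,\mathbb{Z})$. Given an embedding $\rho\colon\Gamma\hookrightarrow\mathrm{Diff}^\omega(\mathbb{S}^2)$, set $\mathrm{H}=\rho(\mathrm{U}(4,\mathbb{Z}))$; by Proposition \ref{Pro:nilpfiniteorbit}, $\mathrm{H}$ has a finite orbit, so after shrinking $\mathrm{H}$ fixes a point $m\in\mathbb{S}^2$ and the $\infty$-jets at $m$ embed it in $\widehat{\mathrm{Diff}}(\mathbb{R}^2,0)\hookrightarrow\widehat{\mathrm{Diff}}(\mathbb{C}^2,0)$.

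Next I would linearize by Malcev rigidity. For each $i$ the truncation $\rho_i\colon\mathrm{U}(4,\mathbb{Z})\to\mathrm{Diff}_i$ takes values, up to conjugation and a further index-two subgroup, in the connected simply-connected algebraic nilpotent subgroup $\mathrm{Diff}_i(\mathrm{T})\subset\mathrm{Diff}_i$ whose elements have linear part in the group $\mathrm{T}$ of upper triangular real matrices with positive diagonal. Malcev's theorem then provides a unique continuous, hence algebraic, extension $\widetilde{\rho}_i\colon\mathrm{U}(4,\mathbb{R})\to\mathrm{Diff}_i(\mathrm{T})$, and these are compatible with truncations. Differentiating at the identity and passing to the projective limit yields a Lie algebra morphism $\psi\colon\mathfrak{u}(4,\mathbb{R})\to\widehat{\chi}(\mathbb{R}^2,0)$ whose image must still be nilpotent of length $3$ because $\rho|_{\mathrm{U}(4,\mathbb{Z})}$ is injective.

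The real analogue of Proposition \ref{Pro:nilpsubalgl3} then forces the image of $\psi$ to be isomorphic to a subalgebra of $\mathfrak{n}$; a short computation shows that any length-$3$ subalgebra of $\mathfrak{n}$ exhausts $\mathfrak{n}$ (apply $\mathrm{ad}$ of a vector field with non-zero $\partial_{\widetilde{y}}$-component iteratively to any $Q(\widetilde{y})\partial_{\widetilde{x}}$ with $\deg Q=2$ to produce the full span $\mathbb{R}[\widetilde{y}]_{\leq 2}\,\partial_{\widetilde{x}}$). Thus $\psi$ surjects onto $\mathfrak{n}$ with kernel a $2$-dimensional ideal of the $6$-dimensional algebra $\mathfrak{u}(4,\mathbb{R})$.

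The contradiction will come from an invariant of the quotient. Using the standard brackets $[E_{ij},E_{kl}]=\delta_{jk}E_{il}-\delta_{il}E_{kj}$ one classifies the $2$-dimensional ideals $I\subset\mathfrak{u}(4,\mathbb{R})$ for which $\mathfrak{u}(4,\mathbb{R})/I$ is nilpotent of length $3$; they are all of the form $\langle E_{14},aE_{13}+bE_{24}\rangle$, and for each such $I$ the centre of the quotient is $2$-dimensional. But $Z(\mathfrak{n})=\mathbb{R}\,\partial_{\widetilde{x}}$ is $1$-dimensional, contradicting $\mathfrak{u}(4,\mathbb{R})/\ker\psi\cong\mathfrak{n}$. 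The step I expect to be the main obstacle is the second one: setting up the Malcev extension at every truncation level so that the derivatives assemble into a genuine, length-$3$-preserving Lie algebra morphism $\psi$ to which Proposition \ref{Pro:nilpsubalgl3} can be applied. The final Lie algebra calculation, although the source of the contradiction, is essentially mechanical once the correct invariant (the dimension of the centre) has been identified.
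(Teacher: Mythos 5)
Your proposal is, in substance, a proof of the preceding Theorem (non-existence of an \emph{embedding} of a finite-index subgroup of $\mathrm{SL}(4,\mathbb{Z})$ into $\mathrm{Diff}^\omega(\mathbb{S}^2)$), and as such it tracks the paper's argument for that Theorem almost step for step: restriction to $\mathrm{U}(4,\mathbb{Z})$, finite orbit, formal germs at a fixed point, Malcev extension at each truncation level, passage to the Lie algebra, Proposition \ref{Pro:nilpsubalgl3}, and the final computation with two-dimensional ideals of $\mathfrak{u}(4,\mathbb{R})$ (your observation that a length-$3$ subalgebra of $\mathfrak{n}$ must be all of $\mathfrak{n}$, and the comparison $\dim Z(\mathfrak{u}(4,\mathbb{R})/\ker\psi)=2\neq 1=\dim Z(\mathfrak{n})$, are both correct and match the paper).

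The gap is that the statement you were asked to prove is the Corollary, which concerns an arbitrary \emph{morphism} $\rho\colon\Gamma\to\mathrm{Diff}^\omega(\mathbb{S}^2)$ and asserts that its image is finite. Your argument begins with ``given an embedding'' and later uses the injectivity of $\rho|_{\mathrm{U}(4,\mathbb{Z})}$ in an essential way: it is what guarantees that $\mathrm{H}=\rho(\mathrm{U}(4,\mathbb{Z}))$, and hence the image of $\psi$, is nilpotent of length exactly $3$, which is what makes Proposition \ref{Pro:nilpsubalgl3} bite. A morphism with infinite image could a priori be very far from injective on $\mathrm{U}(4,\mathbb{Z})$ (its image there could be metabelian, or trivial), and then nothing in your argument produces a contradiction. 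The missing ingredient is the reduction from morphisms to embeddings, which for $n\geq 4$ (so $\mathbb{R}$-rank $n-1\geq 3>1$) is supplied by Margulis' normal subgroup theorem: $\ker\rho$, being normal in the higher-rank lattice $\Gamma$, is either of finite index (in which case the image is finite and there is nothing to prove) or finite and central; in the latter case, residual finiteness of $\Gamma$ yields a finite-index subgroup $\Gamma'$ meeting $\ker\rho$ trivially, so that $\rho|_{\Gamma'}$ is an embedding and the Theorem applies. Without this step the Corollary does not follow from what you have written.
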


\vspace{8mm}

\bibliographystyle{plain}
\bibliography{biblio}

\begin{thebibliography}{10}

\bibitem{Bavard}
C.~Bavard.
\newblock Longueur stable des commutateurs.
\newblock {\em Enseign. Math. (2)}, 37(1-2):109--150, 1991.

\bibitem{Borel}
A.~Borel.
\newblock {\em Linear algebraic groups}, volume 126 of {\em Graduate Texts in
  Mathematics}.
\newblock Springer-Verlag, New York, second edition, 1991.

\bibitem{BurgerMonod}
M.~Burger and N.~Monod.
\newblock Bounded cohomology of lattices in higher rank {L}ie groups.
\newblock {\em J. Eur. Math. Soc. (JEMS)}, 1(2):199--235, 1999.

\bibitem{Cantat1}
S.~Cantat.
\newblock Version k\"ahl\'erienne d'une conjecture de {R}obert {J}. {Z}immer.
\newblock {\em Ann. Sci. \'Ecole Norm. Sup. (4)}, 37(5):759--768, 2004.

\bibitem{Cantat2}
S.~Cantat.
\newblock Sur les groupes de transformations birationnelles des surfaces.
\newblock {\em Ann. of Math. (2)}, 174(1):299--340, 2011.

\bibitem{CerveauMattei}
D.~Cerveau and J.-F. Mattei.
\newblock {\em Formes int\'egrables holomorphes singuli\`eres}, volume~97 of
  {\em Ast\'erisque}.
\newblock Soci\'et\'e Math\'ematique de France, Paris, 1982.
\newblock With an English summary.

\bibitem{Deserti}
J.~D{\'e}serti.
\newblock Groupe de {C}remona et dynamique complexe: une approche de la
  conjecture de {Z}immer.
\newblock {\em Int. Math. Res. Not.}, pages Art. ID 71701, 27, 2006.

\bibitem{FarbShalen}
B.~Farb and P.~Shalen.
\newblock Real-analytic actions of lattices.
\newblock {\em Invent. Math.}, 135(2):273--296, 1999.

\bibitem{FranksHandel}
J.~Franks and M.~Handel.
\newblock Area preserving group actions on surfaces.
\newblock {\em Geom. Topol.}, 7:757--771 (electronic), 2003.

\bibitem{Ghys}
{\'E}.~Ghys.
\newblock Sur les groupes engendr\'es par des diff\'eomorphismes proches de
  l'identit\'e.
\newblock {\em Bol. Soc. Brasil. Mat. (N.S.)}, 24(2):137--178, 1993.

\bibitem{Ghys2}
{\'E}.~Ghys.
\newblock Actions de r\'eseaux sur le cercle.
\newblock {\em Invent. Math.}, 137(1):199--231, 1999.

\bibitem{Malcev2}
A.~I. Malcev.
\newblock On a class of homogeneous spaces.
\newblock {\em Izvestiya Akad. Nauk. SSSR. Ser. Mat.}, 13:9--32, 1949.

\bibitem{Malcev1}
A.~I. Malcev.
\newblock On a class of homogeneous spaces.
\newblock {\em Amer. Math. Soc. Translation}, 1951(39):33, 1951.

\bibitem{Margulis}
G.~A. Margulis.
\newblock {\em Discrete subgroups of semisimple {L}ie groups}, volume~17 of
  {\em Ergebnisse der Mathematik und ihrer Grenzgebiete (3)}.
\newblock Springer-Verlag, Berlin, 1991.

\bibitem{Martinet}
J.~Martinet.
\newblock Normalisation des champs de vecteurs holomorphes (d'apr\`es {A}.-{D}
  {B}rjuno).
\newblock In {\em S\'eminaire Bourbaki (1980/1981)}, volume 901 of {\em Lecture
  Notes in Math.}, pages Exp. No. 765, pp. 103--119. Springer, Berlin, 1981.

\bibitem{Navas}
A.~Navas.
\newblock Actions de groupes de {K}azhdan sur le cercle.
\newblock {\em Ann. Sci. \'Ecole Norm. Sup. (4)}, 35(5):749--758, 2002.

\bibitem{Polterovich}
L.~Polterovich.
\newblock Growth of maps, distortion in groups and symplectic geometry.
\newblock {\em Invent. Math.}, 150(3):655--686, 2002.

\bibitem{Raghunathan}
M.~S. Raghunathan.
\newblock {\em Discrete subgroups of {L}ie groups}.
\newblock Springer-Verlag, New York, 1972.
\newblock Ergebnisse der Mathematik und ihrer Grenzgebiete, Band 68.

\bibitem{VGS}
E.~B. Vinberg, V.~V. Gorbatsevich, and O.~V. Shvartsman.
\newblock Discrete subgroups of {L}ie groups.
\newblock In {\em Lie groups and Lie algebras, II}, volume~21 of {\em
  Encyclopaedia Math. Sci.}, pages 1--123, 217--223. Springer, Berlin, 2000.

\bibitem{Zimmer3}
R.~J. Zimmer.
\newblock Kazhdan groups acting on compact manifolds.
\newblock {\em Invent. Math.}, 75(3):425--436, 1984.

\bibitem{Zimmer4}
R.~J. Zimmer.
\newblock On connection-preserving actions of discrete linear groups.
\newblock {\em Ergodic Theory Dynam. Systems}, 6(4):639--644, 1986.

\bibitem{Zimmer1}
R.~J. Zimmer.
\newblock Actions of semisimple groups and discrete subgroups.
\newblock In {\em Proceedings of the International Congress of Mathematicians,
  Vol. 1, 2 (Berkeley, Calif., 1986)}, pages 1247--1258, Providence, RI, 1987.
  Amer. Math. Soc.

\bibitem{Zimmer2}
R.~J. Zimmer.
\newblock Lattices in semisimple groups and invariant geometric structures on
  compact manifolds.
\newblock In {\em Discrete groups in geometry and analysis (New Haven, Conn.,
  1984)}, volume~67 of {\em Progr. Math.}, pages 152--210. Birkh\"auser Boston,
  Boston, MA, 1987.

\end{thebibliography}
\nocite{}

\end{document}